\newtheorem{theorem}{Theorem} 
\newtheorem{lemma}{Lemma}
\newtheorem{proposition}{Proposition}
\newtheorem{corollary}{Corollary}
\let\scr\mathscr
\def\Pb{\mathbf{P}}
\def\Ex{\mathbf{E}}
\def\BB{\mathbb{B}}
\def\PP{\mathbb{P}}
\def\1{\mbox{1\hspace{-.25em}I}}
\begin{document}
\title{Approximation of BSDE with Hidden Forward Equation and Unknown
  Volatility  }
\author[1]{O.V. Chernoyarov}
\author[2]{Yu.A. Kutoyants}

\affil[1]{\small  National Research University ``MPEI'', Moscow, Russia}
\affil[2]{\small  Le Mans University,  Le Mans,  France}
\affil[1]{\small Maikop State Technological University, Maikop,  Russia}
\affil[1,2]{\small  Tomsk State University, Tomsk, Russia}

\date{}

\maketitle
\begin{abstract}
In the present paper the problem of approximating the solution of BSDE is considered in the
case where the solution of forward equation is observed in the presence
of small Gaussian noise. We suppose that the volatility of the forward
equation depends on an unknown parameter. This approximation is made in
several steps. First we obtain a preliminary estimator of the unknown parameter,
then using Kalman-Bucy filtration equations and Fisher-score device we
construct an one-step MLE-process of this parameter. The solution of BSDE is
approximated by means of the solution of PDE and the One-step MLE-process. The
error of approximation is  described in different metrics. 

\end{abstract}

\bigskip
\noindent {\sl Key words}: \textsl{BSDE, solution approximation, 
 perturbed dynamical systems,  volatility estimation.}

\section{Introduction}

Backward stochastic differential equations (BSDE) were first introduced in the
linear case by Bismuth \cite{B73}. The general (nonlinear) case was initiated
by Pardoux and Peng \cite{PP90}. Since then the BSDE attract high attention
and are intensively developed due to their importance in financial mathematics
and insurance (see, e.g. El Karoui {\it et al.}  \cite{EPQ97}, Ma and Yong
\cite{MY}, Shen and Wei \cite{SW16}, Sun et al. \cite{SZY20} and the
references therein).

Let us recall what is the BSDE in the Markovian case following \cite{EPQ97}. For
the sake of simplicity only one-dimensional processes are considered. Let us
consider a filtered probability space $\left(\Omega 
,\left({\cal F}_t\right)_{t\in \left[0,T\right]}, \PP\right) $ with the
filtration $\left({\cal F}_t\right)_{t\in \left[0,T\right]} $ satisfying the
{\it usual conditions}. Define the stochastic differential equation (called
{\it forward}) by
\begin{align}
\label{1-1}
 {\rm d} X_t=S(t,X_t)\;{\rm d} t+ \sigma  (t,X_t)\;{\rm d} W_t,\ \ X_0,\
0\leq t\leq T,
\end{align}
where $W_t,{\cal F}_t, 0\leq t\leq T$ is the standard Wiener process and $X_0$ is
${\cal F}_0$ measurable  initial value, respectively.   The  trend coefficient $S\left(t,x\right)$
and the diffusion coefficient $\sigma \left(t,x\right)^2$ satisfy the Lipschitz and
linear growth conditions
\begin{align}
\label{1-2}
&\left|S\left(t,x\right)-S\left(t,y\right)\right|+\left|\sigma
\left(t,x\right)-\sigma \left(t,y\right)\right|\leq  L\left|x-y\right|,\\
&\left|S\left(t,x\right)\right|+\left|\sigma
\left(t,x\right)\right|\leq  C\left(1+\left|x\right|\right),
\label{1-3}
\end{align}
where $L>0$ and $C>0$ are constants. Under these conditions the stochastic
differential equation has a unique strong solution (see Liptser and Shiryaev
\cite{LS01}).

The main problem is the following: {\it  Given two functions
  $F\left(t,x,z,\sigma \right)$   and $\Phi
  \left(x\right)$
 we have to construct two processes
  $\left(Z_t,\Sigma _t,{\cal F}_t, 0\leq t\leq T \right)$ such that the
  solution of the stochastic differential equation}
\begin{align}
\label{1-4}
   {\rm d} Z_t=-F(t,X_t,Z_t,\Sigma _t)\;{\rm d} t+\Sigma_t\;{\rm d} W_t,\ \;
   \; 0\leq t\leq T, 
\end{align}
{\it (called {\it backward}) has the terminal value }$Z_T=\Phi
\left(X_T\right)$. 

This equation is often written in integral form as follows
\begin{align*}
Z_t=\Phi
\left(X_T\right)+\int_{t}^{T}F(s,X_s,Z_s,\Sigma_s)\;{\rm d}s- \int_{t}^{T}\Sigma_s\,{\rm
  d}W_s,\quad 0\leq t\leq T .
\end{align*}
We suppose that the functions $F\left(t,x,y,z\right)$ and $\Phi \left(x\right)$
satisfy the conditions
\begin{align}
\label{1-5}
&\left|F\left(t,x,z_1,\sigma _1\right)-F\left(t,x,z_2,\sigma _2\right) \right|\leq
L\left(\left|z_1-z_2\right|+\left|\sigma_1-\sigma_2\right|\right) ,\\
\label{1-6}
&\left|F\left(t,x,z,\sigma \right) \right|+\left|\Phi \left(x\right)\right|\leq
C\left(1+\left|x\right|^p\right) ,
\end{align}
where $p\geq {1}/{2}$.

This is the so-called {\it Markovian case}. For the existence and uniqueness of
the solution see Pardoux and Peng \cite{PP92}.

The solution $\left(Y_t,Z_t,{\cal F}_t, 0\leq t\leq T\right)$ could be
constructed as follows.  Suppose that $u\left(t,x\right)$ is a solution of the
partial differential equation
\begin{equation*}
  \frac{\partial u}{\partial t}+S\left(t,x\right)\frac{\partial u}{\partial
    x}+\frac{\sigma \left(t,x\right)^2}{2} \frac{\partial^2 u}{\partial
    x^2}=-F\left(t,x,u, \sigma \left(t,x\right)\frac{\partial u}{\partial
    x}\right) ,
\end{equation*}
with the terminal condition  $u\left(T,x\right)=\Phi \left(x\right)  $. 

Let us set $Z_t=u\left(t,X_t\right),$ then by It\^o's formula we obtain 
\begin{align*}
{{\rm d}}Z_t&=\left[\frac{\partial u\left(t,X_t\right)}{\partial
    t}+S\left(t,X_t\right)\frac{\partial u\left(t,X_t\right)}{\partial
    x}+\frac{\sigma  
  \left(t,X_t\right)^2}{2} \frac{\partial^2 u\left(t,X_t\right)}{\partial x^2}
  \right]\,{{\rm d}}t\\
&\qquad \qquad  + \sigma \left(t,X_t\right)\frac{\partial u\left(t,X_t\right)}{\partial
  x}\,{{\rm d}}W_t,\qquad Y_0=u\left(0,X_0\right). 
\end{align*}
 We use the notation
\begin{align*}
\frac{\partial u\left(t,X_t\right)}{\partial
  x}=u_x'\left(t,X_t\right)=\left.\frac{\partial u\left(t,x\right)}{\partial
  x}\right|_{x=X_t}.
\end{align*}
Hence if we denote $\Sigma _t=\sigma  \left(t,X_t\right) u_x'\left(t,X_t\right)$, then this
equation becomes 
\begin{align*}
{{\rm d}}Z_t=-F\left(t,X_t,Z_t,\Sigma_t\right){{\rm d}}t+\Sigma_t\,{{\rm d}}W_t,\quad \qquad
Z_0=u\left(0,X_0\right),
\end{align*}
and $Z_T=u\left(T,X_T\right)=\Phi \left(X_T\right)$.
 Therefore the problem is solved and the  equation \eqref{1-4} is obtained with
 given terminal value.

 We are interested in the problem of the approximation of the solution
 $\left(Z_t,\Sigma _t,{\cal F}_t, 0\leq t\leq T\right)$ of BSDE in the
 case where the forward equation \eqref{1-1} contains an unknown
 finite-dimensional parameter $\vartheta $:
\begin{equation*}
 {\rm d} X_t=S(\vartheta ,t,X_t)\;{\rm d} t+\sigma  (\vartheta ,t,X_t)\;{\rm d} W_t,\ \ X_0,\
0\leq t\leq T.
\end{equation*}
Then the solution $u$ of the corresponding partial differential equation
depends on $\vartheta $, i.e., $u=u\left(t,x,\vartheta \right)$.  The
``natural'' approximations $(\hat Z_t,\hat \Sigma _t,{\cal F}_t, 0\leq t\leq
T)$ could be constructed as follows.  Suppose that $u\left(t,x,\vartheta\right)$
is a solution of the partial differential equation
\begin{equation}
\label{1-7}
  \frac{\partial u}{\partial t}+S\left(\vartheta ,t,x\right)\frac{\partial u}{\partial
    x}+\frac{\sigma \left(\vartheta ,t,x\right)^2}{2} \frac{\partial^2 u}{\partial
    x^2}=-F\left(t,x,u, \sigma \left(t,x\right)\frac{\partial u}{\partial
    x}\right) ,
\end{equation}
with the terminal condition  $u\left(T,x,\vartheta \right)=\Phi \left(x\right)  $. 

Of course, we can not set  $Z_t=u\left(t,X_t,\vartheta \right) $ since
$\vartheta $ is unknown. One way to obtain an approximation $(\hat Z_t,\hat
\Sigma _t)$   of $\left(Z_t,\Sigma _t\right)$ is to 
find first an estimator-process $\vartheta_t^*,0< t\leq T $ and then to set
 $$
\hat Z_t=u(t,X_t,\vartheta_t^* ),\qquad \quad \hat
\Sigma _t=u'_x(t,X_t,\vartheta_t^*)\,\sigma  (\vartheta_t^*,t,X_t ).  
$$ 
If the estimator has good properties, say, $\vartheta_t^*-\vartheta $ is small
in some sense, then the error 
 \begin{align*}
\hat Z _t-Z_t=u(t,X_t,\vartheta_t^* )-u(t,X_t,\vartheta)\approx \frac{\partial
  u(t,X_t,\vartheta)}{\partial \vartheta }
\left(\vartheta_t^*-\vartheta\right)
\end{align*}
is small as well.

Here $\vartheta_t^*,0\leq t\leq T $ is a {\sl good estimator-process} of
$\vartheta $ in the sense that:
\begin{itemize}
  \item  {\it The estimator  $
    \vartheta _{t} ^*$ depends on } $X^t=\left(X_s,0\leq s\leq t\right)$.
 \item  {\it It is easy to calculate for each }$t\in (0,T]$.
 \item {\it Provides the asymptotically efficient estimation of $Z_t$, i.e.,}
  $$
\Ex_\vartheta\left(\hat Z_{t}  -Z_{t}    \right)^2\rightarrow \min .
$$ 
\end{itemize}

Therefore the main problem is {\it how to find a good estimator-process}. Such
problems were studied in the works \cite{GK15}, \cite{Kut14}, \cite{Kut16},
\cite{KZ14}.

Different problems were solved following the same
general procedure, which is illustrated as follows. Consider the forward equation
with small volatility: $\sigma \left(t,X_t,\vartheta \right)=\varepsilon \sigma
\left(t,X_t\right)$, where $\sigma \left(t,x\right)$ satisfies the conditions
\eqref{1-2}, \eqref{1-3}, $\varepsilon \in (0,1]$ is a small parameter,
  i.e. we consider asymptotics $\varepsilon \rightarrow 0$. Introduce a
  learning interval $\left[0,\tau _\varepsilon \right]$, where $\tau
  _\varepsilon \rightarrow 0$ but {\it slowly}. Using observations $X^{\tau
    _\varepsilon }=\left(X_t,0\leq t\leq \tau _\varepsilon \right)$, a
  preliminary consistent estimator $\vartheta _{\tau _\varepsilon} ^*$ of
  $\vartheta $ is constructed.  Then with the help of slightly modified
  Fisher-score device this estimator is improved up to the asymptotically
  ($\varepsilon \rightarrow 0$) efficient One-step MLE-process $\vartheta
  _{t,\varepsilon }^\star,\tau _\varepsilon ,<t\leq T$.  Now the approximation
  of $Z_t,\Sigma _t$, is given by the relations
\begin{align*}
\hat Z_t=u\left(t,X_t,\vartheta _{t,\varepsilon }^\star\right),\qquad \hat
\Sigma _t=\varepsilon \sigma \left(t,X_t\right)u'_x\left(t,X_t,\vartheta
_{t,\varepsilon }^\star\right)  
\end{align*}
It is shown that these approximations are asymptotically efficient. For the
details see \cite{Kut14}, \cite{Kut16}, \cite{KZ14}.  In \cite{GK15}
it is supposed that the volatility $\sigma \left(t,X_t,\vartheta \right)$
depends on $\vartheta $ and the forward equation  \eqref{1-1} is observed in
discrete times. Then a similar procedure of approximation was realized. 

In all the previous problems the forward equation was assumed to be
observed directly; however, in the present work we suppose that we have a partially
observed linear system, where the forward equation is {\it hidden} and we
observe its solution in the presence of  white Gaussian noise.

\section{Main result}   

\subsection{Model of observations and BSDE}
Suppose that the forward equation is 
\begin{align}
\label{2-1}
{\rm d}Y_t=-a\left(t\right)Y_t{\rm d}t+b\left(\vartheta
,t\right){\rm d}V_t,\quad  Y_0=y_0,\quad 0\leq t\leq T.
\end{align}
The solution $Y^T=\left(Y_t,0\leq t\leq T\right)$ of this equation can not
be observed directly and only the observations 
\begin{align}
\label{2-2}
{\rm d}X_t=f\left(t\right)Y_t{\rm d}t+\varepsilon \sigma
\left(t\right){\rm d}W_t,\qquad X_0=0,\quad 0\leq t\leq T
\end{align}
are available. The parameter $\vartheta \in\Theta =\left(\alpha ,\beta \right)$, where
$\left|\alpha\right|+\left|\beta \right|<\infty $.  Here $a\left(\cdot
\right), b\left( \cdot \right),$ $ f\left(\cdot \right)$ and $\sigma \left(\cdot
\right)$ are known functions and $\varepsilon \in (0,1]$. These functions
  satisfy the following regularity conditions.

{\it Conditions ${\cal A}$}.

${\cal A}_1$. {\it  The functions $a\left(t \right), b\left(\vartheta
,t\right), f\left(t \right)$ and $\sigma \left(t\right)$ have
continuous derivatives
w.r.t. $t\in\left[0,T\right] $.

${\cal A}_2$. The functions $ b\left(\vartheta
,t\right), f\left(t \right)$ and $\sigma \left(t\right)$ are
separated from zero by a constant, which does not depend neither on $\vartheta $ nor on $t$.}

We consider two functions $F(t,y,u,s)$, $\Phi \left(y \right)$ and
observations $X^T=\left(X_t,0\leq t\leq T\right)$ and we aim at constructing the
corresponding BSDE. Of course, we can not construct the
BSDE
\begin{align}
\label{2-3}
{\rm d}Z_t=-F\left(t,Y_t,Z_t,s_t\right){\rm d}t+s_t{\rm d}V_t,\quad Z_T=\Phi
\left(Y_T\right),\quad 0\leq t\leq T 
\end{align}
 for two reasons: first we have no access to the process $Y^T$ (no Wiener
 process $V_t$) and even if we
 have $Y^T$ the  solution $U=U\left(t,y,\vartheta \right)$ of the
 corresponding PDE 
\begin{align}
\label{2-4}
\frac{\partial U}{\partial t}-a\left(t\right)y\frac{\partial
  U}{\partial y}+\frac{ b\left(\vartheta
,t\right)^2 }{2} \frac{\partial^2 U}{\partial
  y^2}&=-F\left(t,y,U,b\left(\vartheta ,t\right)\frac{\partial
  U}{\partial y} \right),\nonumber\\
&\qquad U\left(T,y,\vartheta \right)=\Phi \left(y\right) 
\end{align}
 depends on the unknown parameter $\vartheta $. Therefore we can not set
 $Z_t=U\left(t,Y_t,\vartheta \right)$ since neither $Y_t$ nor
 $\vartheta $ are known.

As we have no solution $Y^T$ of the forward equation we reformulate
the problem and propose BSDE based on the best in the mean squared
estimator of this process. Introduce   the conditional expectation $\hat Y^T=(\hat
Y_t, 0\leq t\leq T)$, where  $\hat
Y_t=\Ex_{\vartheta } \left(Y_t|X_s,0\leq s\leq t\right)$. Now the
corresponding BSDE becomes
\begin{align}
\label{2-5}
{\rm d}Z_t=-F(t,\hat Y_t,Z_t,s\left(t\right)){\rm d}t+s\left(t\right){\rm
  d}\bar W_t,\qquad Z_T=\Phi (\hat Y_T),\quad 0\leq t\leq T ,
\end{align}
where the Wiener process $\bar W_t,0\leq t\leq T$ is described below. To construct
the equation \eqref{2-5} we need the equations of Kalman-Bucy filtration for
$\hat Y_t$, which we remind here. It will be convenient to denote $\hat
Y_t=m\left(\vartheta ,t\right)$ in order to show the dependence on $\vartheta $. The
equation for $m\left(\vartheta ,t\right)$ is (see \cite{LS01})
\begin{align}
\label{2-6a}
{\rm d}m\left(\vartheta ,t\right)&=-\left[a\left(t\right)
  +\frac{\gamma \left(\vartheta ,t\right)f\left(
   t\right)^2}{\varepsilon ^2\sigma \left(t\right)^2}
  \right]m\left(\vartheta ,t\right){\rm d}t
+ \frac{\gamma \left(\vartheta
  ,t\right)f\left(t\right)}{\varepsilon ^2\sigma \left(t\right)^2}
{\rm d}X_t.
\end{align}
  Here $m\left(\vartheta ,t\right)=0$  and  $\gamma
\left(\vartheta ,t\right)=\Ex_{\vartheta }\left(Y_t-m\left(\vartheta ,t\right)
\right)^2$ is the solution of Riccati equation
\begin{align}
\label{2-6b}
\frac{\partial \gamma
\left(\vartheta ,t\right)}{\partial \tau }=-2a\left(t\right)\gamma
\left(\vartheta ,t\right)-\frac{\gamma
\left(\vartheta ,t\right)^2f\left(t\right)^2}{\varepsilon ^2\sigma
  \left(t\right)^2}+b\left(\vartheta ,t\right)^2,\quad \gamma
\left(\vartheta ,0\right)=0 . 
\end{align}
We further denote
\begin{align*}
\gamma_* \left(\vartheta ,t\right)&=\frac{\gamma \left(\vartheta
  ,t\right)}{\varepsilon },\quad \gamma _0\left(\vartheta
t\right)=\frac{b\left(\vartheta ,t\right)\sigma \left(t\right)}{f\left(
 t\right)},\quad A_\varepsilon \left(\vartheta ,t\right)=\frac{\gamma_*
  \left(\vartheta ,t\right)f\left(t\right)}{\sigma
  \left(t\right)^2},\\
 A_0\left(\vartheta ,t\right)&= \frac{b\left(\vartheta
  ,t\right)}{\sigma \left(t\right)},\qquad\quad q_\varepsilon \left(\vartheta
,t\right)=a\left(t\right)+\frac{A_\varepsilon \left(\vartheta ,t\right)f\left(
 t\right)}{\varepsilon }.
\end{align*}

The true value is denoted by $\vartheta _0$. The equation \eqref{2-6a} for $m\left(\vartheta
_0,t\right)$  and   Riccati equation \eqref{2-6b} can be re-written as follows
\begin{align*}
{\rm d}m\left(\vartheta _0,t\right)&=-a\left(t\right)
m\left(\vartheta _0 ,t\right){\rm d}t+ A_\varepsilon \left(\vartheta _0
,t\right)\sigma \left(t\right) {\rm d}\bar W_t,\\ \
\frac{\partial \gamma_* \left(\vartheta _0
  ,t\right)}{\partial \tau }&=-2a\left(t\right)\gamma_*
\left(\vartheta _0 ,t\right)-\frac{A_\varepsilon \left(\vartheta_0
  ,t\right)^2\sigma \left(t\right)^2}{\varepsilon
}+\frac{b\left(\vartheta _0 ,t\right)^2}{\varepsilon },
\end{align*}
with initial values  $m\left(\vartheta _0
,0\right)=0 $ and $\gamma_*
\left(\vartheta _0 ,0\right)=0,$ respectively.   Here $\bar W_t, {\cal F}_t,$ $0\leq t\leq T$ is the {\it innovation} Wiener process
defined by the relation
\begin{align*}
{\rm d}X_t=f\left(t\right)m\left(\vartheta _0,t\right){\rm
  d}t+\varepsilon \sigma \left(t\right){\rm d}\bar W_t,\qquad X_0=0
\end{align*}
(see \cite{LS01}, Theorem 7.12).

\begin{lemma}
\label{L1} Let the conditions ${\cal A}$ be fulfilled. Then for
any $t_0\in (0,T]$ we have the convergence
\begin{align}
\label{2-6c}
\sup_{t_0\leq t\leq T}\left|\gamma _*\left(\vartheta ,t\right)- \gamma _0\left(\vartheta ,t\right)\right|\longrightarrow 0,\quad \sup_{t_0\leq t\leq T}\left|A_\varepsilon \left(\vartheta ,t\right)- A _0\left(\vartheta ,t\right)\right|\longrightarrow 0.
\end{align}
\end{lemma}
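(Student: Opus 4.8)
The plan is to analyze the Riccati equation \eqref{2-6b} after the rescaling $\gamma_*=\gamma/\varepsilon$, showing that for $t$ bounded away from $0$ the scaled variance settles to the algebraic root of the limiting equation. First I would rewrite \eqref{2-6b} in terms of $\gamma_*$: dividing by $\varepsilon$ gives
\begin{align*}
\varepsilon\,\frac{\partial \gamma_*\left(\vartheta ,t\right)}{\partial t}=-2a\left(t\right)\varepsilon\,\gamma_*\left(\vartheta ,t\right)-\frac{f\left(t\right)^2}{\sigma\left(t\right)^2}\gamma_*\left(\vartheta ,t\right)^2+b\left(\vartheta ,t\right)^2,\qquad \gamma_*\left(\vartheta ,0\right)=0.
\end{align*}
This is a singularly perturbed ODE: as $\varepsilon\to0$ the left-hand side and the first term on the right drop out, and the formal limit is the algebraic relation $b\left(\vartheta ,t\right)^2=f\left(t\right)^2\sigma\left(t\right)^{-2}\gamma_0^2$, whose positive solution is exactly $\gamma_0\left(\vartheta ,t\right)=b\left(\vartheta ,t\right)\sigma\left(t\right)/f\left(t\right)$ as defined in the excerpt. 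The second convergence in \eqref{2-6c} then follows immediately from the first, since $A_\varepsilon=\gamma_* f/\sigma^2$ and $A_0=b/\sigma=\gamma_0 f/\sigma^2$, and $f,\sigma$ are bounded with $\sigma$ bounded away from zero by Conditions ${\cal A}$.

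For the first convergence I would set $\delta_\varepsilon\left(\vartheta ,t\right)=\gamma_*\left(\vartheta ,t\right)-\gamma_0\left(\vartheta ,t\right)$ and derive its ODE. Substituting $\gamma_*=\gamma_0+\delta_\varepsilon$ and using $f^2\sigma^{-2}\gamma_0^2=b^2$, the quadratic term produces $-f^2\sigma^{-2}(2\gamma_0\delta_\varepsilon+\delta_\varepsilon^2)$, so
\begin{align*}
\varepsilon\,\frac{\partial \delta_\varepsilon}{\partial t}=-\frac{f^2}{\sigma^2}\delta_\varepsilon^2-\left(2a\varepsilon+\frac{2f^2\gamma_0}{\sigma^2}\right)\delta_\varepsilon -\varepsilon\left(2a\gamma_0+\frac{\partial\gamma_0}{\partial t}\right),\qquad \delta_\varepsilon\left(\vartheta ,0\right)=-\gamma_0\left(\vartheta ,0\right).
\end{align*}
The linear coefficient $c_\varepsilon\left(\vartheta ,t\right):=2f\left(t\right)^2\gamma_0\left(\vartheta ,t\right)\sigma\left(t\right)^{-2}+2a\left(t\right)\varepsilon$ is, for $\varepsilon$ small, bounded below by a positive constant $c_*$ uniformly in $t\in[0,T]$ and $\vartheta\in\Theta$: indeed $\gamma_0=b\sigma/f$ is bounded away from zero by ${\cal A}_2$, and $b,f,\sigma$ are bounded by ${\cal A}_1$. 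This strict negativity of the linear term is the stabilizing mechanism. The forcing term $\varepsilon(2a\gamma_0+\partial_t\gamma_0)$ is $O(\varepsilon)$ uniformly, using ${\cal A}_1$ to bound $\partial_t\gamma_0$. One then runs a comparison/Gronwall argument on the interval $[t_0,T]$: away from the initial layer the dynamics $\varepsilon\dot\delta_\varepsilon\le -c_*\delta_\varepsilon+O(\varepsilon)$ (the quadratic term has the helpful sign once $\delta_\varepsilon\ge0$, and a symmetric bound handles $\delta_\varepsilon\le0$) force $|\delta_\varepsilon\left(\vartheta ,t\right)|$ down to order $\varepsilon$ at exponential rate $e^{-c_*(t-s)/\varepsilon}$; since the initial datum is bounded, $\sup_{t_0\le t\le T}|\delta_\varepsilon|\le C\varepsilon + Ce^{-c_* t_0/\varepsilon}\to0$.

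The main obstacle is making the boundary-layer estimate uniform in $\vartheta\in\Theta$ and controlling the quadratic nonlinearity during the transient phase near $t=0$, where $\delta_\varepsilon$ is not yet small and the sign of the $-\delta_\varepsilon^2$ term cooperates only on one side. The clean way around this is to first establish a crude a priori bound $0\le\gamma_*\left(\vartheta ,t\right)\le K$ for all $t\in[0,T]$, $\varepsilon\in(0,1]$, $\vartheta\in\Theta$ (positivity because $\gamma_*$ is a rescaled conditional variance; the upper bound from a comparison with the solution of $\varepsilon\dot g=b^2-f^2\sigma^{-2}g^2$, which is explicitly bounded), and only then feed this bound into the linearized analysis above. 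With that two-sided a priori control in hand, the Gronwall step is routine and the uniformity in $\vartheta$ is immediate because every constant produced ($c_*$, $K$, the bounds on $\partial_t\gamma_0$) depends only on the uniform bounds furnished by Conditions ${\cal A}$.
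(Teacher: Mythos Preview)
Your argument is correct: the rescaled Riccati equation is a textbook singularly perturbed ODE, and your two-step plan (crude a~priori bound $0\le\gamma_*\le K$ via the sign of the quadratic term, then linearization around $\gamma_0$ with a Gronwall/comparison estimate giving the boundary-layer bound $|\delta_\varepsilon(t)|\le C\varepsilon+Ce^{-c_*t/\varepsilon}$) is exactly the right mechanism, and all constants are indeed uniform in $\vartheta$ under Conditions~${\cal A}$. One small point to tidy: the comparison function you propose for the upper bound, $\varepsilon\dot g=b^2-f^2\sigma^{-2}g^2$, is an honest supersolution only when $a\ge0$; for general bounded $a$ it is cleaner just to observe that whenever $\gamma_*=M$ the right-hand side $-2a\varepsilon M-(f^2/\sigma^2)M^2+b^2$ is negative for $M$ large enough, uniformly in $\varepsilon\in(0,1]$, so $\gamma_*$ cannot cross $M$.

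As for the comparison with the paper: the paper does not actually give a proof of Lemma~\ref{L1} at all --- it simply cites Lemma~2 of \cite{Kut19}. So your self-contained singular-perturbation analysis supplies strictly more than the paper does here. The approach you outline is the standard one for scalar Riccati equations with a large damping term and is almost certainly what the cited reference does as well, so there is no substantive divergence in method, only in presentation.
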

\begin{proof} See Lemma 2 in \cite{Kut19}.
\end{proof}
This lemma allows us to verify the following obvious result
\begin{lemma}
\label{L2}   Let the conditions ${\cal A}$ be fulfilled. Then for
any $t_0\in (0,T]$ we have the convergence
\begin{align}
\label{2-6d}
\sup_{t_0\leq t\leq T}\Ex_{\vartheta _0}\left|m\left(\vartheta _0,t\right)-Y_t\right|^2\leq C\varepsilon \rightarrow 0
\end{align}
as $\varepsilon \rightarrow 0$.
\end{lemma}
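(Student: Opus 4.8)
The plan is to control the mean squared error $\Ex_{\vartheta_0}\bigl|m(\vartheta_0,t)-Y_t\bigr|^2$ by exploiting the defining property of the Kalman--Bucy filter: $m(\vartheta_0,t)$ is the conditional mean of $Y_t$ given $\mathcal{F}_t^X$, hence the minimal mean squared error estimator, and its error is precisely the filtering variance. So I would start from the identity
\begin{align*}
\Ex_{\vartheta_0}\bigl|m(\vartheta_0,t)-Y_t\bigr|^2=\gamma(\vartheta_0,t)=\varepsilon\,\gamma_*(\vartheta_0,t),
\end{align*}
which reduces the whole statement to a uniform (on $[t_0,T]$) bound on $\gamma_*(\vartheta_0,t)$. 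The latter is exactly what Lemma~\ref{L1} provides: since $\gamma_*(\vartheta,\cdot)$ converges uniformly on $[t_0,T]$ to the bounded limit $\gamma_0(\vartheta,t)=b(\vartheta,t)\sigma(t)/f(t)$, the family $\gamma_*(\vartheta_0,t)$ is uniformly bounded on $[t_0,T]$ for all small $\varepsilon$, say by a constant $C=C(t_0)$. Combining, $\sup_{t_0\le t\le T}\Ex_{\vartheta_0}|m(\vartheta_0,t)-Y_t|^2\le C\varepsilon$, which gives \eqref{2-6d}.

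For the first identity above I would recall the standard fact (Liptser--Shiryaev \cite{LS01}, the filtering theorem cited in the text) that for the linear Gaussian system \eqref{2-1}--\eqref{2-2} the conditional law of $Y_t$ given $X^t$ is Gaussian with mean $m(\vartheta_0,t)$ and variance $\gamma(\vartheta_0,t)$, where $\gamma$ solves the Riccati equation \eqref{2-6b}; in particular the conditional variance is deterministic and equals the unconditional mean squared filtering error, so $\Ex_{\vartheta_0}|Y_t-m(\vartheta_0,t)|^2=\gamma(\vartheta_0,t)$. Then $\gamma(\vartheta_0,t)=\varepsilon\gamma_*(\vartheta_0,t)$ is just the definition of $\gamma_*$, and the scaling by $\varepsilon$ is what produces the rate in \eqref{2-6d}.

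I do not expect a serious obstacle here — indeed the authors themselves call this an "obvious result." The only point requiring a word of care is the restriction to $t\in[t_0,T]$ with $t_0>0$: near $t=0$ one has $\gamma(\vartheta_0,0)=0$ but $\gamma_*(\vartheta_0,t)=\gamma(\vartheta_0,t)/\varepsilon$ need not be bounded uniformly in $\varepsilon$ as $t\downarrow 0$ (the transient of the Riccati equation), so the uniform bound on $\gamma_*$ genuinely needs the separation $t\ge t_0$, which is why the convergence in Lemma~\ref{L1} is stated on $[t_0,T]$ and the same restriction propagates to Lemma~\ref{L2}. With that caveat noted, the proof is a two-line consequence of the optimality of the conditional mean together with Lemma~\ref{L1}.
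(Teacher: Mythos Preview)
Your proof is correct, and indeed shorter than the one the authors give. The identity $\Ex_{\vartheta_0}|m(\vartheta_0,t)-Y_t|^2=\gamma(\vartheta_0,t)$ is in fact nothing more than the paper's own definition of $\gamma$ (given just below \eqref{2-6a}), so you do not even need to invoke Gaussianity or optimality of the conditional mean; then $\gamma=\varepsilon\gamma_*$ and Lemma~\ref{L1} finish the job exactly as you say.

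The paper proceeds differently: it writes down the linear SDE satisfied by the error $\delta_t=m(\vartheta_0,t)-Y_t$, solves it explicitly as a sum of two stochastic integrals against $V$ and $W$ (this is equation \eqref{2-6e}), computes the second moment directly, and bounds the resulting deterministic integral using the fact that $q_\varepsilon(\vartheta_0,\cdot)$ contains a term of order $1/\varepsilon$, which produces the factor $C\varepsilon[1-e^{-ct/\varepsilon}]$. What this longer route buys is the explicit integral representation \eqref{2-6e} of $m(\vartheta_0,t)-Y_t$, which the paper then reuses in the proof of Theorem~\ref{T1} to obtain the asymptotic form of the term $u_y'(\cdot)(m(\vartheta_0,t)-Y_t)$ in the expansion of $\hat Z_t-Z_t$. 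Your argument gives the bound cleanly but does not produce this representation, so if you were writing the full paper you would still need the SDE computation later; as a self-contained proof of Lemma~\ref{L2} alone, however, your approach is both valid and more economical.
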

\begin{proof} For the difference $\delta _t=m\left(\vartheta _0,t\right)-Y_t$
  we have the equation
\begin{align*}
{\rm d}\delta _t&=-a\left(t\right)\delta _t{\rm
  d}t-b\left(\vartheta _0,t\right){\rm d}V_t +A_\varepsilon\left(\vartheta
_0,t\right)\sigma \left(t\right) {\rm d}\bar W_t\\
 &=-q_\varepsilon\left(\vartheta _0,t\right) \delta _t{\rm d}t-b\left(\vartheta _0,t\right){\rm
  d}V_t+A_\varepsilon\left(\vartheta
_0,t\right)\sigma \left(t\right){\rm d}W_t,
\end{align*}
where 
\begin{align*}
q_\varepsilon \left(\vartheta _0,t\right)=a\left(t\right)+{\varepsilon }^{-1}{A_\varepsilon\left(\vartheta
_0,t\right)f\left(t\right) } .
\end{align*}
Hence
\begin{align}
\label{2-6e}
\delta _t&=-\int_{0}^{t}e^{-\int_{s}^{t}q_\varepsilon \left(\vartheta
  _0,v\right){\rm d}v} b\left(\vartheta _0,s\right){\rm d}V_s\nonumber\\
&\qquad \qquad \qquad +\int_{0}^{t}e^{-\int_{s}^{t}q_\varepsilon \left(\vartheta
  _0,v\right){\rm d}v} A_\varepsilon \left(\vartheta _0,s\right)\sigma \left(s\right){\rm d}W_s
\end{align}
and
\begin{align*}
&\Ex_{\vartheta _0}\left|m\left(\vartheta
_0,t\right)-Y_t\right|^2=\int_{0}^{t}e^{-2\int_{s}^{t}q_\varepsilon
  \left(\vartheta _0,v\right){\rm d}v} \left[b\left(\vartheta
  _0,s\right)^2+A_\varepsilon \left(\vartheta _0,s\right)^2\sigma
  \left(s\right)^2\right]{\rm d}s \\
 &\qquad \leq C\int_{0}^{t}e^{-\frac{2}{\varepsilon }\int_{s}^{t}A_\varepsilon
  \left(\vartheta _0,v\right)f\left( _0,v\right){\rm d}v}{\rm d}s
\leq C\int_{0}^{t}e^{-\frac{c\left(t-s\right)}{\varepsilon }}{\rm d}s \leq
C\varepsilon \left[1-e^{-\frac{ct}{\varepsilon }}\right] .
\end{align*}
Here we used the condition ${\cal A}_2$ and the boundedness of all functions.

\end{proof}
Therefore for small $\varepsilon $ the random process $m\left(\vartheta
_0,t\right)$ is a good approximation of the solution $Y_t$ of the forward equation.

It is worth mentioning that if $\vartheta _0$ is known, then in order to construct  \eqref{2-5} with
innovation Wiener process we need the solution of the partial differential
equation
\begin{align}
\label{2-7}
\frac{\partial u}{\partial t}-a\left(t\right)y\frac{\partial
  u}{\partial y}+\frac{ B_\varepsilon \left(\vartheta _0 ,t\right)^2 }{2} \frac{\partial^2 u}{\partial
  y^2}&=-F\left(t,y,u,B_\varepsilon \left(\vartheta _0 ,t\right)  \frac{\partial
  u}{\partial y}  \right),\nonumber\\
 u\left(T,y,\vartheta_0,\varepsilon  \right)&=\Phi\left(y\right),
\end{align}
where $B_\varepsilon \left(\vartheta _0 ,t\right)=A_\varepsilon
\left(\vartheta _0 ,t\right)\sigma \left(t\right)$. If the solution
of this equation was available, then
$$
Z_t=u\left(t,m\left(\vartheta _0,t\right),\vartheta _0,\varepsilon\right),\;
s\left(t\right)=  A_\varepsilon \left(\vartheta _0 ,t\right)  \sigma
  \left(t\right) \frac{\partial u}{\partial y}\left(t,m\left(\vartheta
  _0,t\right),\vartheta _0,\varepsilon\right) 
$$
would form the equation \eqref{2-5}.

We further denote
\begin{align*}
\frac{\partial }{\partial t}u\left(t,y,\vartheta
,\varepsilon\right)&=u_t'\left(t,y,\vartheta ,\varepsilon\right),\qquad
\frac{\partial }{\partial y}u\left(t,y,\vartheta
,\varepsilon\right)=u_y'\left(t,y,\vartheta ,\varepsilon\right),\\
\frac{\partial }{\partial \vartheta }u\left(t,y,\vartheta
,\varepsilon\right)&=\dot u\left(t,y,\vartheta ,\varepsilon\right),\qquad
\frac{\partial }{\partial \varepsilon }u\left(t,y,\vartheta
,\varepsilon\right)=u_\varepsilon '\left(t,y,\vartheta ,\varepsilon\right).
\end{align*}

Suppose that we have some estimator-process $\vartheta _{t,\varepsilon }^*$
which is consistent :  for any $t\in (0,T]$ the estimator $\vartheta
  _{t,\varepsilon }^*\rightarrow \vartheta _0$. Let us set
$$
\hat Z_t=u\left(t,\hat m_t,\vartheta_{t,\varepsilon }^*,\varepsilon\right),\qquad 
\hat s\left(t\right)=  A_\varepsilon \left(\vartheta_{t,\varepsilon }^* ,t\right)  \sigma
  \left(t\right) u'\left(t,\hat m_t,\vartheta_{t,\varepsilon }^*,\varepsilon\right),
$$ 
where $\hat m_t$ is an approximation of $m\left(\vartheta _0,t\right)$. 
Then we have the following question: what is the relation between the solution
  $Z_t$ of the equations \eqref{2-3}, the solution
  $Z_t$ of the equations \eqref{2-5} and the approximation $\hat Z_t $
introduced above? 

The convergences
\begin{align*}
\gamma _*\left(\vartheta,t \right)\longrightarrow \gamma _0\left(\vartheta,t
\right),\qquad A_\varepsilon \left(\vartheta,t \right)\longrightarrow
A_0\left(\vartheta,t \right),\qquad m\left(\vartheta
_0,t\right)\longrightarrow Y_t,
\end{align*}
imply that the coefficient $B_\varepsilon \left(\vartheta,t \right)^2 $ in the equation
 \eqref{2-7} converges to $b\left(\vartheta ,t\right)^2$ in the equation
 \eqref{2-4}. Hence under regularity conditions the solution $u\left(\cdot
 ,\cdot ,\cdot \right)$ of \eqref{2-7} converges to the solution $u\left(\cdot
 ,\cdot ,\cdot \right)$ of \eqref{2-4}.

Note that we have no BSDE for the approximation process
\begin{align*}
{\rm d}\hat Z_t=-F(t,\hat Z_t,\hat m_t, \hat
s\left(t\right) ){\rm d}t+ \hat s\left(t\right){\rm d}\bar W_t,\qquad \hat Z_T=\Phi
(\hat m_T ).
\end{align*}
The stochastic differential for the random process $\hat
Z_t=u\left(t,\hat m_t,\vartheta_{t,\varepsilon }^*,\varepsilon \right) $ could be written analytically (it is
different of given above), but it is
quite cumbersome and it is not used in the proofs. Our goal is to propose an
approximation of the solution $Z_t$ of the equation \eqref{2-3} and to study
the error of approximation, say, $\Ex_{\vartheta _0}(Z_t-\hat Z_t
)^2$. Moreover, the optimality of such approximation is discussed.

\subsection{Preliminary estimators}

Our objective is to construct a {\it good estimator-process} $\left(\vartheta
_{t,\varepsilon },\tau \leq t\leq T\right)$ and for this construction we need
a preliminary estimator $\bar\vartheta _{\tau ,\varepsilon }$ constructed by
the first observations $X^{\tau }=\left(X_t,0\leq t\leq \tau 
\right)$ on the (small) time interval $\left[0,\tau \right] $ where  $\tau \in
( 0,T] $.  In this section we propose two such estimators. One is the MLE
  $\hat\vartheta _{\tau ,\varepsilon }$ and the other is the estimator of
  substitution which uses the estimator of the quadratic variation of the
  derivative of the limit of the observed process. 

The likelihood ratio function is (see \cite{LS01})
\begin{align*}
L\left(\vartheta ,X^\tau \right)=\exp\left\{\int_{0}^{\tau
  }\frac{f\left(t\right)m\left(\vartheta ,t\right)}{\varepsilon
    ^2\sigma \left(t\right)^2}{\rm d}X_t-\int_{0}^{\tau
  }\frac{f\left(t\right)^2m\left(\vartheta
    ,t\right)^2}{2\varepsilon ^2\sigma \left(t\right)^2}{\rm
  d}t\right\},\vartheta \in\Theta , 
\end{align*}
and  the corresponding  maximum likelihood estimator (MLE) $\hat\vartheta _{\tau,\varepsilon  }$
 is defined  by 
\begin{align}
\label{2-8}
L(\hat\vartheta _{\tau,\varepsilon  }  ,X^\tau )=\sup_{\vartheta \in\Theta
}L\left(\vartheta ,X^\tau \right) .
\end{align}

In the sequel, let us introduce the notation
\begin{align*}
 {\rm I}^\tau \left(\vartheta \right)&=\int_{0}^{\tau
}\frac{f\left(t\right)\dot b\left(\vartheta ,t\right)^2}{2b\left(\vartheta ,t\right)\sigma
  \left(t\right)}\;{\rm d}t ,\quad 
G_\tau\left(\vartheta ,\vartheta _0\right)=\int_{0}^{\tau
}\frac{f\left(t\right)\left[ b\left(\vartheta ,t\right)- b\left(\vartheta_0
    ,t\right)\right]^2}{2b\left(\vartheta ,t\right)\sigma 
  \left(t\right)}\;{\rm d}t.
\end{align*}

{\it Conditions ${\cal B}$}.

${\cal B}_1$. {\it  The function $ b\left(\vartheta
,t\right)$ has three continuous derivatives
w.r.t. $\vartheta\in \Theta  $.}

${\cal B}_2$. {\it Identifiability condition: For any $\tau \in (0,T]$ and $\nu >0$
\begin{align*}
\inf_{\vartheta _0\in\Theta }\inf_{\left|\vartheta -\vartheta _0\right|>\nu
}G_\tau\left(\vartheta ,\vartheta _0\right) >0.
\end{align*}

${\cal B}_3$. Non degeneracy of Fisher information: For any $\tau \in (0,T]$
\begin{align*}
\inf_{\vartheta\in\Theta }{\rm I}^\tau \left(\vartheta \right)>0.
\end{align*}

}

Note that if $f\left(0\right)>0$ and $\inf_{\vartheta \in\Theta } \left|\dot
b\left(\vartheta ,0\right)b\left(\vartheta ,0\right)^{-1}\right|>0$, then the
condition ${\cal B}_3$ is fulfilled.

\begin{proposition}
\label{P1}
The MLE $\hat\vartheta _{\tau ,\varepsilon }$  under regularity conditions
${\cal A}$, ${\cal B}$ is consistent, asymptotically 
normal
\begin{align}
\label{2-9}
\sqrt{\frac{{\rm I}^\tau \left(\vartheta_0 \right)}{\varepsilon
}}  \left(\hat\vartheta _{\tau,\varepsilon  }-\vartheta
_0\right)\Longrightarrow \zeta \sim {\cal N}\left(0,1 \right), 
\end{align}
asymptotically efficient and the moments converge: for any $p>0$ 
\begin{align}
\label{2-10}
 \left|\frac{{\rm I}^\tau \left(\vartheta_0 \right)}{\varepsilon
}\right|^{p/2}  \Ex_{\vartheta _0} \left|\hat\vartheta _{\tau,\varepsilon  }-\vartheta
_0\right|^p \longrightarrow  \Ex\left|\zeta \right|^p.
\end{align}
\end{proposition}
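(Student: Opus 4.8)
The plan is to reduce the problem to a standard small-noise asymptotics analysis of the likelihood ratio, following the general scheme of Ibragimov--Khasminskii adapted to diffusion-type observations as in Liptser and Shiryaev. The key observation is that on the learning interval $[0,\tau]$ the log-likelihood $\ln L(\vartheta,X^\tau)$, after the change $\vartheta=\vartheta_0+\sqrt{\varepsilon}\,u/\sqrt{{\rm I}^\tau(\vartheta_0)}$, should converge (in a uniform-on-compacts sense, together with its derivatives) to the quadratic form of a Gaussian shift experiment. First I would write the log-likelihood explicitly using \eqref{2-2}: substituting ${\rm d}X_t=f(t)m(\vartheta_0,t){\rm d}t+\varepsilon\sigma(t){\rm d}\bar W_t$ gives
\begin{align*}
\ln L(\vartheta,X^\tau)=\frac{1}{\varepsilon}\int_0^\tau\frac{f(t)^2m(\vartheta,t)m(\vartheta_0,t)-\tfrac12 f(t)^2m(\vartheta,t)^2}{\varepsilon\sigma(t)^2}{\rm d}t+\frac{1}{\varepsilon}\int_0^\tau\frac{f(t)m(\vartheta,t)}{\sigma(t)}{\rm d}\bar W_t.
\end{align*}
The drift part equals $-\frac{1}{2\varepsilon}\int_0^\tau \frac{f(t)^2[m(\vartheta,t)-m(\vartheta_0,t)]^2}{\varepsilon\sigma(t)^2}{\rm d}t$ plus a $\vartheta$-free term, so the score and observed information are governed by the behaviour of $\dot m(\vartheta,t)$ as $\varepsilon\to0$.

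Second, the analytical heart is to control $m(\vartheta,t)$ and its $\vartheta$-derivative in the small-noise limit. From the filtering equation \eqref{2-6a} and the Riccati equation \eqref{2-6b}, together with Lemma~\ref{L1}, one has $\gamma_*(\vartheta,t)\to\gamma_0(\vartheta,t)=b(\vartheta,t)\sigma(t)/f(t)$ uniformly on $[t_0,T]$; differentiating the Riccati equation in $\vartheta$ yields a linear equation for $\dot\gamma_*$ whose limit is $\dot\gamma_0=\dot b\,\sigma/f$. Feeding this into the (differentiated) equation for $m$, one shows that $\varepsilon^{-1}\int_0^\tau \frac{f(t)^2\dot m(\vartheta,t)^2}{\varepsilon\sigma(t)^2}{\rm d}t$ converges, after the appropriate normalisation, to ${\rm I}^\tau(\vartheta)=\int_0^\tau\frac{f(t)\dot b(\vartheta,t)^2}{2b(\vartheta,t)\sigma(t)}{\rm d}t$ — this is precisely where the factor $f/(2b\sigma)$ in ${\rm I}^\tau$ comes from, via $\dot\gamma_0 f/\sigma^2\cdot f\cdot\dots$ and the quadratic-variation contribution of the fast filter. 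Likewise the martingale term, renormalised by $\sqrt{\varepsilon/{\rm I}^\tau(\vartheta_0)}$, converges to a standard normal $\zeta$ by the central limit theorem for stochastic integrals, and the cross term produces the shift. This gives the local asymptotic normality of the family.

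Third, from LAN plus the global identifiability condition ${\cal B}_2$ (which guarantees that outside a neighbourhood of $\vartheta_0$ the normalised log-likelihood tends to $-\infty$ since $G_\tau(\vartheta,\vartheta_0)$ is exactly the limit of $\frac{1}{2\varepsilon}\int_0^\tau\frac{f(t)[m(\vartheta,t)-m(\vartheta_0,t)]^2}{\varepsilon\sigma(t)^2}{\rm d}t$ up to the $\varepsilon$-scaling), the consistency of $\hat\vartheta_{\tau,\varepsilon}$, the convergence \eqref{2-9} and asymptotic efficiency follow from the Ibragimov--Khasminskii theorems. For the moment convergence \eqref{2-10} I would verify the two standard sufficient conditions: (i) a bound $\Ex_{\vartheta_0}|L^{1/2}(\vartheta_1,X^\tau)-L^{1/2}(\vartheta_2,X^\tau)|^2\le C|\vartheta_1-\vartheta_2|^2$ (using ${\cal B}_1$ and the boundedness of the filtering coefficients from ${\cal A}_2$), and (ii) an exponential upper bound $\Ex_{\vartheta_0}L^{1/2}(\vartheta,X^\tau)\le e^{-\kappa\,{\rm I}^\tau(\vartheta_0)|\vartheta-\vartheta_0|^2/\varepsilon}$ on compacts and $\le e^{-\kappa/\varepsilon}$ far away, which together with uniform integrability of $|\sqrt{{\rm I}^\tau(\vartheta_0)/\varepsilon}(\hat\vartheta_{\tau,\varepsilon}-\vartheta_0)|^p$ yields \eqref{2-10}.

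The main obstacle I expect is step two: rigorously transferring the convergences of Lemma~\ref{L1} to the $\vartheta$-derivatives $\dot m(\vartheta,t)$, $\dot\gamma(\vartheta,t)$ uniformly in $t$ and with moment control, since the filter equation has a coefficient $q_\varepsilon(\vartheta,t)$ blowing up like $\varepsilon^{-1}$, so one must carefully use the exponential contraction $e^{-\int_s^t q_\varepsilon}\le e^{-c(t-s)/\varepsilon}$ (as already exploited in the proof of Lemma~\ref{L2}) to show that the ``memory'' of the filter is short and the relevant integrals concentrate — the same mechanism that makes ${\rm I}^\tau(\vartheta)$ an honest integral rather than a boundary term. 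Everything else is the now-routine Ibragimov--Khasminskii machinery, essentially identical to the directly-observed case treated in \cite{Kut14,Kut16,KZ14}, with $b(\vartheta,t)$ playing the role of the volatility.
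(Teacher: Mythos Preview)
The paper does not actually prove Proposition~\ref{P1}: its entire proof is the single sentence ``For the proof see \cite{Kut19}, Theorem 1.'' Your outline is precisely the Ibragimov--Khasminskii scheme that \cite{Kut19} carries out for this model, so your approach is correct and matches what the referenced proof does. One small slip: the observed Fisher information is $\int_0^\tau \frac{f(t)^2\dot m(\vartheta,t)^2}{\varepsilon\,\sigma(t)^2}\,{\rm d}t$ without an extra $\varepsilon^{-1}$ in front (recall from the representation used later in the paper that $\dot m(\vartheta_0,s)=\sqrt{\varepsilon\sigma(s)/(2b(\vartheta_0,s)f(s))}\,\dot b(\vartheta_0,s)\,\xi_{s,\varepsilon}+O(\varepsilon)$, so $\dot m^2/\varepsilon$ is already $O(1)$ and integrates against $f^2/\sigma^2$ to ${\rm I}^\tau(\vartheta_0)$); otherwise your identification of where the factor $f/(2b\sigma)$ comes from, and of ${\cal B}_2$ as the large-deviation input via $G_\tau$, is exactly right.
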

For the proof see  \cite{Kut19}, Theorem 1. 

Note that we can not use the MLE-process $\hat\vartheta _{t,\varepsilon } ,
0<t\leq T$ as a good estimator-process since in order to solve equation \eqref{2-8} for
all $t\in (0,T]$ we need the solutions $m\left(\vartheta ,s\right), 0\leq
  s\leq t$ of equations \eqref{2-6a} for all $\vartheta \in\Theta $ and all
  $t\in (0,T]$. From a computational point of view, a good estimator-process would be one that
  could be easily computed. Therefore, $ \hat\vartheta _{\tau,\varepsilon  }$ could be considered as a
    preliminary estimator. Its calculation is simpler because we need to solve
    \eqref{2-8} just once. 

Note that even the calculation of the preliminary MLE $\hat\vartheta _{\tau
  ,\varepsilon }$ by  \eqref{2-8} requires the solution of the filtration
equations for many values of $\vartheta $. Below we propose another estimator
which requires much more simple  calculations.

Let us consider another estimation procedure of the preliminary estimator
based on the following property of the model. Remark that the observed process $X^\tau $
converges with probability 1 to $ x^\tau $:
\begin{align*}
\sup_{0\leq t\leq \tau }\left|X_t- x_t\right|\longrightarrow 0,
\end{align*}
where $x^\tau=\left(x_t,0\leq t\leq \tau \right) $ satisfies the limit ($\varepsilon =0$) relation
\begin{align*}
x_t=\int_{0}^{t} f\left(s\right)Y_s\,{\rm d}s,\qquad \qquad 0\leq t\leq \tau .
\end{align*} 
Here $Y_t$ is solution of the forward equation \eqref{2-1}. Let us put  $
N_t=f\left(t\right)Y_t$. Then by It\^o formula
\begin{align*}
N_\tau ^2-2\int_{0}^{\tau } N_t{\rm d}N_t=\int_{0}^{\tau }f\left(
t\right)^2b\left(\vartheta _0,t\right)^2{\rm d}t.
\end{align*}
We further define
\begin{align*}
\Psi \left(\vartheta \right)&=\int_{0}^{\tau } f\left(t\right)^2
b\left(\vartheta ,t\right)^2{\rm d}t,\qquad \bar N_{\tau ,\varepsilon
}=\frac{1}{\varphi _\varepsilon }\int_{0}^{\tau }K_*\left(\frac{s-t }{\varphi
  _\varepsilon }\right){\rm d}X_s ,\\ N_{t ,\varepsilon }&=\frac{1}{\varphi
  _\varepsilon }\int_{0}^{\tau }K\left(\frac{s-t }{\varphi _\varepsilon
}\right){\rm d}X_s ,\quad 0\leq t\leq \tau ,\\ \hat\Psi _\varepsilon& =\bar N_{\tau
  ,\varepsilon } ^2-2\int_{0}^{\tau } N_{t ,\varepsilon }{\rm d}N_{t
  ,\varepsilon },\qquad \dot \Psi \left(\vartheta \right)=2\int_{0}^{\tau } f\left(t\right)^2
b\left(\vartheta ,t\right)\dot b\left(\vartheta ,t\right){\rm d}t.
\end{align*}
Here the one-sided  kernels $K_*\left(\cdot \right)$ and $K\left(\cdot
\right)$ satisfy the usual conditions 
\begin{align*}
&K_*\left(u\right)\geq 0,\qquad \int_{-1}^{0} K_*\left(u\right){\rm d}u=1,\qquad
K_*\left(u\right)=0,\;{\rm for}\; u\not\in \left[-1,0\right],\\
&K\left(u\right)\geq 0,\qquad \int_{0}^{1} K\left(u\right){\rm d}u=1,\quad \qquad
K\left(u\right)=0,\;{\rm for}\; u\not\in \left[0,1\right] .
\end{align*}
Further, suppose that the function  $\Psi \left(\vartheta \right),\vartheta
\in\Theta  $ is monotone increasing and denote 
\begin{align*}
\psi _m&=\inf _{\vartheta \in\Theta }\Psi \left(\vartheta \right),\quad \psi
_M=\sup _{\vartheta \in\Theta }\Psi \left(\vartheta \right) ,\qquad \psi
_m=\Psi \left(\alpha  \right),\quad \psi_M=\Psi \left(\beta   \right), \\
G\left(\psi \right)&=\Psi ^{-1}\left(\psi \right),\quad \psi _m<\psi <\psi
_M,\qquad \alpha <G\left(\psi \right)<\beta ,\qquad \eta _\varepsilon
=G(\hat\Psi _\varepsilon ), \\ 
\BB_m&=\left\{\omega :\quad \hat\Psi_\varepsilon \leq  \psi _m \right\},\qquad
\BB_M=\left\{\omega :\quad \hat\Psi_\varepsilon \geq \psi _M \right\}, \\
\BB&=\left\{\omega :\quad \psi _m <\hat\Psi_\varepsilon < \psi _M
\right\},\qquad g\left(\nu \right)=\inf_{\vartheta _0\in\Theta }\inf_{\left|\vartheta
  -\vartheta _0\right|>\nu }\left|\Psi \left(\vartheta 
\right)-\Psi \left(\vartheta_0 \right)\right|. 
\end{align*}
The    substitution estimator (SE) is introduced as follows
\begin{align}
\label{2-10a}
\check \vartheta _{\tau ,\varepsilon }&=\alpha \1_{\left\{\BB_m\right\}}+\eta
_\varepsilon\1_{\left\{\BB\right\}}+\beta \1_{\left\{\BB_M\right\}} .
\end{align}
It has the following properties. 

\begin{proposition}
\label{P2} Suppose that the conditions  ${\cal A}_1,{\cal B}_1 $ are  fulfilled, for any
(small) $\nu >0$ we have  $g\left(\nu \right)>0$ and $\inf_{\vartheta
  \in\Theta }\dot \Psi \left(\vartheta \right)>0$. Then the SE $\check
\vartheta _{\tau ,\varepsilon } $ is uniformly consistent and for any $p>0$
there exists a constant $C=C\left(p\right)>0$ such that
\begin{align}
\label{2-10b}
\sup_{\vartheta _0\in\Theta }\varepsilon ^{-p/2}\Ex_{\vartheta _0}\left|\check
\vartheta _{\tau ,\varepsilon }-\vartheta _0\right|^p\leq C .
\end{align}
\end{proposition}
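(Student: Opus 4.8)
The plan is to transfer, via the explicit formula $\check\vartheta_{\tau,\varepsilon}=G(\hat\Psi_\varepsilon)$ restricted to the event $\BB$, the control of $\check\vartheta_{\tau,\varepsilon}-\vartheta_0$ to a control of $\hat\Psi_\varepsilon-\Psi(\vartheta_0)$. First I would establish that $\hat\Psi_\varepsilon$ is a good estimator of $\Psi(\vartheta_0)$ in the sense that, for every $p>0$,
\begin{align}
\label{Pproof-1}
\sup_{\vartheta_0\in\Theta}\varepsilon^{-p/2}\,\Ex_{\vartheta_0}\bigl|\hat\Psi_\varepsilon-\Psi(\vartheta_0)\bigr|^p\le C.
\end{align}
To this end I would decompose $N_{t,\varepsilon}=\frac1{\varphi_\varepsilon}\int_0^\tau K\!\left(\frac{s-t}{\varphi_\varepsilon}\right){\rm d}X_s$ using the observation equation \eqref{2-2}: the drift part gives a smoothing of $f(s)Y_s$ and the noise part gives a term of order $\varepsilon/\sqrt{\varphi_\varepsilon}$. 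By the smoothness assumptions ${\cal A}_1$ (and the a.s.\ continuity of $Y^\tau$, with all the needed moments available from the linear Gaussian structure of \eqref{2-1}) the smoothed drift approximates $N_t=f(t)Y_t$ with a bias of order $\varphi_\varepsilon$; likewise $\bar N_{\tau,\varepsilon}$ approximates $N_\tau$. Substituting into $\hat\Psi_\varepsilon=\bar N_{\tau,\varepsilon}^2-2\int_0^\tau N_{t,\varepsilon}\,{\rm d}N_{t,\varepsilon}$ and using the It\^o identity $N_\tau^2-2\int_0^\tau N_t\,{\rm d}N_t=\int_0^\tau f(t)^2 b(\vartheta_0,t)^2\,{\rm d}t=\Psi(\vartheta_0)$, I obtain $\hat\Psi_\varepsilon-\Psi(\vartheta_0)$ as a sum of a deterministic bias term and stochastic terms. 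Choosing $\varphi_\varepsilon\to0$ at an appropriate polynomial rate in $\varepsilon$ (so that both $\varphi_\varepsilon$ and $\varepsilon/\sqrt{\varphi_\varepsilon}$ are $O(\sqrt\varepsilon)$, e.g.\ $\varphi_\varepsilon\sim\varepsilon^{2/3}$ up to constants) yields \eqref{Pproof-1}, the moment bounds coming from Gaussianity of the noise and Burkholder–Davis–Gundy for the stochastic integrals.

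Next I would handle the three events $\BB_m,\BB,\BB_M$ separately. On $\BB$ we have $\check\vartheta_{\tau,\varepsilon}-\vartheta_0=G(\hat\Psi_\varepsilon)-G(\Psi(\vartheta_0))$, and since $G=\Psi^{-1}$ with $\inf_\vartheta\dot\Psi(\vartheta)>0$, the inverse function $G$ is Lipschitz on $(\psi_m,\psi_M)$; hence $|\check\vartheta_{\tau,\varepsilon}-\vartheta_0|\le C\,|\hat\Psi_\varepsilon-\Psi(\vartheta_0)|$ on $\BB$, and \eqref{Pproof-1} transfers directly. On $\BB_m$ we have $\check\vartheta_{\tau,\varepsilon}=\alpha$ and $\hat\Psi_\varepsilon\le\psi_m=\Psi(\alpha)\le\Psi(\vartheta_0)$, so $|\check\vartheta_{\tau,\varepsilon}-\vartheta_0|=\vartheta_0-\alpha$ while $|\hat\Psi_\varepsilon-\Psi(\vartheta_0)|\ge\Psi(\vartheta_0)-\Psi(\alpha)\ge g(\vartheta_0-\alpha)$ by definition of $g$; combining with monotonicity and the lower bound $\dot\Psi\ge c>0$ (which forces $\Psi(\vartheta_0)-\Psi(\alpha)\ge c(\vartheta_0-\alpha)$, so in fact $|\check\vartheta_{\tau,\varepsilon}-\vartheta_0|\le c^{-1}|\hat\Psi_\varepsilon-\Psi(\vartheta_0)|$ on $\BB_m$), and symmetrically on $\BB_M$. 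Thus on all three events $|\check\vartheta_{\tau,\varepsilon}-\vartheta_0|\le C|\hat\Psi_\varepsilon-\Psi(\vartheta_0)|$, and taking $p$th moments and applying \eqref{Pproof-1} gives \eqref{2-10b}. Uniform consistency is the case $p$ arbitrary (or $p=2$ together with Markov's inequality).

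The main obstacle, I expect, is the first step: getting the correct rate $\varepsilon^{1/2}$ for $\hat\Psi_\varepsilon-\Psi(\vartheta_0)$ with uniform-in-$\vartheta_0$ moment bounds. One must carefully balance the two sources of error in the kernel smoothing — the deterministic smoothing bias $O(\varphi_\varepsilon)$ coming from replacing $f(t)Y_t$ by its local average, and the stochastic error $O(\varepsilon/\sqrt{\varphi_\varepsilon})$ coming from the $\varepsilon\sigma(t)\,{\rm d}W_t$ part integrated against the kernel — and in addition control the quadratic-type functional $\int_0^\tau N_{t,\varepsilon}\,{\rm d}N_{t,\varepsilon}$, where the differential ${\rm d}N_{t,\varepsilon}$ itself involves differentiating the kernel smoothing and is therefore more delicate. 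The choice $\varphi_\varepsilon$ of order $\varepsilon^{2/3}$ equalizes $\varphi_\varepsilon$ and $\varepsilon/\sqrt{\varphi_\varepsilon}$, but whether the paper's stated rate $\varepsilon^{1/2}$ in \eqref{2-10b} is exactly what the construction delivers, or whether the construction actually needs a slightly different normalization of $\varphi_\varepsilon$, would need to be checked against the precise definition of $\hat\Psi_\varepsilon$; everything else (BDG moment bounds, Lipschitz continuity of $G$, the boundary-event estimates) is routine.
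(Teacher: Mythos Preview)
The paper does not give a self-contained proof of this proposition: it simply refers to \cite{Kut20a} for the argument and for more general results. So there is no in-paper proof to compare your sketch against, and your overall strategy---reduce \eqref{2-10b} to the moment bound \eqref{Pproof-1} for $\hat\Psi_\varepsilon-\Psi(\vartheta_0)$, then use that $G=\Psi^{-1}$ is Lipschitz thanks to $\inf_\vartheta\dot\Psi(\vartheta)>0$, with the boundary events $\BB_m,\BB_M$ handled by the same monotonicity---is the natural one and almost certainly what the referenced paper does.

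One point in your sketch deserves correction. You claim the smoothed drift approximates $N_t=f(t)Y_t$ with bias $O(\varphi_\varepsilon)$, appealing to ${\cal A}_1$ and continuity of $Y^\tau$. But $Y$ solves the forward SDE \eqref{2-1} driven by Brownian motion, so $s\mapsto Y_s$ is only H\"older of order $1/2$ (in $L^p$: $\Ex_{\vartheta_0}|Y_s-Y_t|^p\le C|s-t|^{p/2}$), not Lipschitz. The kernel-smoothing bias for the drift part is therefore $O(\varphi_\varepsilon^{1/2})$, not $O(\varphi_\varepsilon)$, and your proposed balancing $\varphi_\varepsilon\sim\varepsilon^{2/3}$ is off: equating $\varphi_\varepsilon^{1/2}$ and $\varepsilon\varphi_\varepsilon^{-1/2}$ gives $\varphi_\varepsilon\sim\varepsilon$, which then makes both error contributions of order $\sqrt\varepsilon$ and is consistent with the rate in \eqref{2-10b}. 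This also feeds into the more delicate term $\int_0^\tau N_{t,\varepsilon}\,{\rm d}N_{t,\varepsilon}$, where differentiating the kernel costs an extra $\varphi_\varepsilon^{-1}$ and the analysis of the quadratic-variation functional has to exploit cancellations rather than a na\"ive pointwise bound; you rightly flagged this as the crux, but the H\"older-$1/2$ regularity of $Y$ is what governs the correct bandwidth choice.
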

For the proof and more general results see \cite{Kut20a}.

\subsection{One-step MLE-process}

Below we consider the MLE $\hat\vartheta _{\tau ,\varepsilon }$ as a preliminary
estimator. Following the same steps it could be shown that the SE
$\check\vartheta _{\tau ,\varepsilon }$ could also be used as the preliminary. Recall that this estimator is easier to calculate and the property
\eqref{2-10b} is sufficient for the proof of the Proposition \ref{P3}. 

Let us introduce the statistic
\begin{align*}
\vartheta _{t,\varepsilon }^\star=\hat\vartheta _{\tau,\varepsilon
}+\frac{1}{{\rm I}_\tau^t (\hat\vartheta _{\tau,\varepsilon  } )} \int_{\tau
}^{t}\frac{f(s)\dot m(\hat\vartheta _{\tau,\varepsilon  },s)}{{\varepsilon}
  \sigma
    \left(s\right)^2} \left[{\rm d}X_s-f(s)m(\hat\vartheta _{\tau,\varepsilon
  },s ){\rm d}s\right],
\end{align*}
where  ${\rm I}_\tau^t \left(\vartheta \right)$ is  the Fisher information  
\begin{align*}
{\rm I}_\tau^t \left(\vartheta \right)=\int_{\tau}^{t
}\frac{f\left(s\right)\dot b\left(\vartheta ,s\right)^2}{2b\left(\vartheta ,s\right)\sigma
  \left(s\right)}\;{\rm d}s.
\end{align*}
We have now to precise how to calculate the values $\dot m(\hat\vartheta
_{\tau,\varepsilon  },s) $ and $  m(\hat\vartheta _{\tau,\varepsilon  },s)$
since according to \eqref{2-6a} we have 
\begin{align*}
 m(\vartheta ,t)&=\int_{0}^{t}e^{-\int_{s}^{t}q_\varepsilon \left(\vartheta ,v\right){\rm
     d}v} \frac{\gamma \left(\vartheta ,s\right)f\left(s\right)}{\varepsilon
   ^2\sigma \left(s\right)^2}{\rm d}X_s\\
&=e^{-\int_{0}^{t}q_\varepsilon \left(\vartheta ,v\right){\rm
     d}v}\int_{0}^{t}e^{\int_{0}^{s}q_\varepsilon \left(\vartheta ,v\right){\rm
     d}v} \frac{\gamma \left(\vartheta ,s\right)f\left(s\right)}{\varepsilon
   ^2\sigma \left(s\right)^2}{\rm d}X_s \\
&=h\left(\vartheta ,t\right)\int_{0}^{t}H\left(\vartheta ,s\right){\rm d}X_s.
\end{align*}
As usual in such situations we replace the stochastic
integral by an ordinary one as follows. We have
\begin{align*}
\int_{0}^{t}H\left(\vartheta ,s\right){\rm d}X_s =H\left(\vartheta ,t\right)
X_t-\int_{0}^{t}X_s H'_s\left(\vartheta ,s\right){\rm d}s.
\end{align*}
Let us denote by $N\left(\vartheta ,t,X^t\right)$ the right hand side of this
equality. Then we can set
\begin{align*}
m(\hat\vartheta _{\tau,\varepsilon
  } ,s)=h(\hat\vartheta _{\tau,\varepsilon
},s)N(\hat\vartheta _{\tau,\varepsilon  } ,s,X^s). 
\end{align*}
The similar relation could also be written for $\dot m(\hat\vartheta _{\tau,\varepsilon
  } ,s) $. 

Further, introduce the  random processes
\begin{align*}
&\eta _{t,\varepsilon }=\frac{\vartheta _{t,\varepsilon }^\star -\vartheta
  _0}{\sqrt{\varepsilon }},\qquad \qquad   \tau \leq t\leq T,\\
&\eta _t=\frac{1}{{\rm I}_\tau^t (\vartheta _0 )} \int_{\tau
}^{t}\frac{\dot b\left(\vartheta _0,s\right) \sqrt{f\left(s\right)}
  }{\sqrt{2b\left(\vartheta _0,s\right)\sigma \left(s\right)}}\;{\rm
    d}w\left(s\right), \qquad   \tau \leq t\leq T, 
\end{align*}
where $w\left(s\right),0 \leq s\leq T$ is some standard Wiener process.

In the sequel, we need an additional condition.

${\cal B}_4$.{\it  Non degeneracy of Fisher information: For any $t_0 \in (\tau,T]$}
\begin{align*}
\inf_{\vartheta\in\Theta }{\rm I}^{t_0}_\tau \left(\vartheta \right)>0.
\end{align*}

\begin{proposition}
\label{P3} Let the conditions ${\cal A}$, ${\cal B}$ be fulfilled. Then the
One-step MLE-process $\vartheta _{t,\varepsilon }^\star,\tau <t\leq T$ is
uniformly consistent: for any $\nu >0$ and any $t_0\in (\tau ,T)$
\begin{align}
\label{2-11}
\Pb_{\vartheta _0}\left(\sup_{t_0\leq t\leq T}\left|\vartheta _{t,\varepsilon
}^\star-\vartheta _0 \right|>\nu \right)\longrightarrow 0, 
\end{align}
the stochastic process $\eta _{t,\varepsilon }, t_0\leq t\leq T$ converges in
distribution in the measurable space $\left({\cal C}\left[t_0,T\right],{\scr
  B}\right)$ to the random process $\eta _{t }, t_0\leq t\leq T$
\begin{align}
\label{2-12}
\eta _{\cdot ,\varepsilon }\Longrightarrow \eta _{\cdot  },\qquad \eta _{t
}\sim {\cal N}\left(0, {\rm I}_\tau^t (\vartheta _0 )^{-1}\right).
\end{align}
\end{proposition}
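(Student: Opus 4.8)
The plan is to analyze the One-step MLE-process $\vartheta_{t,\varepsilon}^\star$ by decomposing the Fisher-score increment into a ``main'' stochastic term plus negligible remainders, exactly as in the classical one-step MLE analysis but now tracked uniformly in $t$. Recall that
\begin{align*}
\vartheta_{t,\varepsilon}^\star-\vartheta_0
=\bigl(\hat\vartheta_{\tau,\varepsilon}-\vartheta_0\bigr)
+\frac{1}{{\rm I}_\tau^t(\hat\vartheta_{\tau,\varepsilon})}\int_\tau^t
\frac{f(s)\dot m(\hat\vartheta_{\tau,\varepsilon},s)}{\varepsilon\sigma(s)^2}
\bigl[{\rm d}X_s-f(s)m(\hat\vartheta_{\tau,\varepsilon},s){\rm d}s\bigr].
\end{align*}
First I would substitute the innovation representation ${\rm d}X_s=f(s)m(\vartheta_0,s){\rm d}s+\varepsilon\sigma(s){\rm d}\bar W_s$ into the bracket, so that it becomes $f(s)\bigl[m(\vartheta_0,s)-m(\hat\vartheta_{\tau,\varepsilon},s)\bigr]{\rm d}s+\varepsilon\sigma(s){\rm d}\bar W_s$. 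Expanding $m(\hat\vartheta_{\tau,\varepsilon},s)-m(\vartheta_0,s)\approx\dot m(\vartheta_0,s)(\hat\vartheta_{\tau,\varepsilon}-\vartheta_0)$ by Taylor's formula in $\vartheta$, the drift part produces a term of the form $-(\hat\vartheta_{\tau,\varepsilon}-\vartheta_0)$ times an integral of $f^2\dot m^2/(\varepsilon\sigma^2)$; after using Lemma \ref{L1} (the convergences $\gamma_*\to\gamma_0$, $A_\varepsilon\to A_0$, hence $\dot m(\vartheta_0,s)\to\dot B_0(\vartheta_0,s)$-type limits) this integral, once divided by ${\rm I}_\tau^t$, cancels the leading $(\hat\vartheta_{\tau,\varepsilon}-\vartheta_0)$ term, which is the whole point of the Fisher-score correction. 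What survives is
\begin{align*}
\vartheta_{t,\varepsilon}^\star-\vartheta_0
=\frac{1}{{\rm I}_\tau^t(\vartheta_0)}\int_\tau^t
\frac{f(s)\dot m(\vartheta_0,s)}{\sigma(s)}\,{\rm d}\bar W_s
+R_{t,\varepsilon},
\end{align*}
with $R_{t,\varepsilon}$ a sum of remainders: the Taylor second-order term (controlled by $|\hat\vartheta_{\tau,\varepsilon}-\vartheta_0|^2$ and condition ${\cal B}_1$, which gives three derivatives), the difference coming from replacing ${\rm I}_\tau^t(\hat\vartheta_{\tau,\varepsilon})$ by ${\rm I}_\tau^t(\vartheta_0)$ and the $\varepsilon$-dependent Fisher information by its limit (Lemma \ref{L1} plus ${\cal B}_4$ for non-degeneracy uniformly in $t$), and the difference between the finite-$\varepsilon$ quantities $\dot m$, $A_\varepsilon$, $\gamma_*$ and their $\varepsilon=0$ limits.

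Next I would rescale by $\sqrt\varepsilon$. Using Lemma \ref{L1} to identify $A_\varepsilon(\vartheta_0,s)\sigma(s)\to b(\vartheta_0,s)$, one checks that $\varepsilon^{-1/2}\dot m(\vartheta_0,s)$ has a non-degenerate limit and that $\varepsilon^{-1/2}\int_\tau^t f\dot m\,\sigma^{-1}{\rm d}\bar W_s$ converges, as a process in $t\in[t_0,T]$, to the Gaussian martingale $\int_\tau^t \dot b(\vartheta_0,s)\sqrt{f(s)}/\sqrt{2b(\vartheta_0,s)\sigma(s)}\,{\rm d}w(s)$ appearing in the definition of $\eta_t$; the quadratic-variation computation reproduces exactly ${\rm I}_\tau^t(\vartheta_0)$, so $\eta_t\sim{\cal N}(0,{\rm I}_\tau^t(\vartheta_0)^{-1})$. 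For the functional (i.e. ${\cal C}[t_0,T]$) convergence I would verify tightness via a Kolmogorov-type moment bound on increments $\Ex_{\vartheta_0}|\eta_{t,\varepsilon}-\eta_{t',\varepsilon}|^{2k}\le C|t-t'|^k$ (the process is, up to the smooth random normalization $1/{\rm I}_\tau^t$, an It\^o integral with bounded integrand, so Burkholder--Davis--Gundy gives this immediately), together with convergence of finite-dimensional distributions from the martingale CLT; Slutsky then disposes of $\varepsilon^{-1/2}R_{t,\varepsilon}\to0$ uniformly on $[t_0,T]$, using Proposition \ref{P1} (which gives $\Ex_{\vartheta_0}|\hat\vartheta_{\tau,\varepsilon}-\vartheta_0|^p=O(\varepsilon^{p/2})$, so the squared Taylor remainder is $O(\varepsilon)$ and vanishes after dividing by $\sqrt\varepsilon$) and the uniform-in-$t$ rate $C\varepsilon$ from Lemma \ref{L2} and the Lemma \ref{L1} convergences.

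For the uniform consistency \eqref{2-11}, the representation above already shows $\sup_{t_0\le t\le T}|\vartheta_{t,\varepsilon}^\star-\vartheta_0|\le|\hat\vartheta_{\tau,\varepsilon}-\vartheta_0|+\text{(the sup of the score term)}$, and the score term is $\sqrt\varepsilon$ times a tight process, hence $O_{\Pb}(\sqrt\varepsilon)$; combined with consistency of $\hat\vartheta_{\tau,\varepsilon}$ this gives \eqref{2-11}. The main obstacle, I expect, is making the cancellation of the leading term genuinely uniform in $t\in[t_0,T]$ and simultaneously controlling the substitution of $\hat\vartheta_{\tau,\varepsilon}$ into the nonlinear, $\varepsilon$-dependent filter $m(\cdot,s)$ and its derivative $\dot m(\cdot,s)$: these involve the exponential weights $e^{-\int q_\varepsilon}$ with $q_\varepsilon\sim c/\varepsilon$, so one must show that the $\vartheta$-derivatives of $m(\vartheta,s)$ remain bounded (after the correct $\varepsilon$-scaling) uniformly in $s$ and $\vartheta$ despite this stiffness — this is where conditions ${\cal A}_2$ (separation from zero) and ${\cal B}_1$ together with Lemma \ref{L1} do the real work, and where the bulk of the technical estimates will lie.
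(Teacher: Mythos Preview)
Your overall architecture matches the paper's proof closely: innovation substitution, Taylor expansion of $m(\hat\vartheta_{\tau,\varepsilon},\cdot)$ around $\vartheta_0$, cancellation of the leading $(\hat\vartheta_{\tau,\varepsilon}-\vartheta_0)$ term, then finite-dimensional convergence plus a Kolmogorov fourth-moment tightness bound. That is exactly how the paper proceeds.

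There is, however, one point where your description is off and where the real mechanism differs from what you sketch. You write that Lemma~\ref{L1} gives ``$\dot m(\vartheta_0,s)\to\dot B_0(\vartheta_0,s)$-type limits'' and later that ``$\varepsilon^{-1/2}\dot m(\vartheta_0,s)$ has a non-degenerate limit,'' treating the stochastic integral as one with an essentially deterministic (bounded) integrand. In fact $\dot m(\vartheta_0,s)$ is \emph{random} and does \emph{not} converge to a deterministic function: the paper imports from \cite{Kut19} (Lemma~6 there) the representation
\[
\dot m(\vartheta_0,s)=\sqrt{\tfrac{\varepsilon\,\sigma(s)}{2b(\vartheta_0,s)f(s)}}\;\dot b(\vartheta_0,s)\,\xi_{s,\varepsilon}+\varepsilon\,R_{s,\varepsilon},
\]
where the $\xi_{s,\varepsilon}$ are standard Gaussians that become \emph{asymptotically independent} in $s$ as $\varepsilon\to0$. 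This white-noise-in-$s$ structure is what makes both halves of your argument work: the cancellation happens because $\int_\tau^t\frac{f\dot b^2}{2b\sigma}\xi_{s,\varepsilon}^2\,{\rm d}s\to{\rm I}_\tau^t(\vartheta_0)$ by a law-of-large-numbers effect for the $\xi_{s,\varepsilon}^2$, not by a deterministic limit of $\dot m$; and the martingale CLT for $\int_\tau^t(\cdots)\xi_{s,\varepsilon}\,{\rm d}\bar W_s$ goes through because its (random) quadratic variation converges to the deterministic ${\rm I}_\tau^t(\vartheta_0)$ for the same reason. Likewise, the moment bound you need for tightness is not ``bounded integrand $+$ BDG'' but rather the estimate $\sup_{t}\Ex_{\vartheta_0}|\dot m(\vartheta_0,t)|^p\le C\varepsilon^{p/2}$ (equation \eqref{2-13} in the paper), which again comes from this representation. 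Your closing paragraph correctly anticipates that controlling $\dot m$ is where the work lies; the specific content of that work is the $\xi_{s,\varepsilon}$ representation, which you should invoke explicitly rather than Lemma~\ref{L1} alone.
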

\begin{proof}
Consider the normalized difference
\begin{align*}
\frac{\vartheta _{t,\varepsilon }^\star -\vartheta _0}{\sqrt{\varepsilon }}
&=\frac{\hat\vartheta _{\tau,\varepsilon }-\vartheta _0}{ \sqrt{\varepsilon
}}+\frac{1}{{\rm I}_\tau^t (\hat\vartheta _{\tau,\varepsilon } )} \int_{\tau
}^{t}\frac{f\left(s\right)\dot m(\hat\vartheta _{\tau,\varepsilon },s)}{
  \sqrt{\varepsilon }\sigma \left(s\right)} {\rm d}\bar W_s\\ 
&\qquad +\frac{1}{{\rm I}_\tau^t (\hat\vartheta _{\tau,\varepsilon } )} \int_{\tau
}^{t}\frac{f\left(s\right)^2\dot m(\hat\vartheta _{\tau,\varepsilon },s)}{{\varepsilon^{3/2}}
  \sigma     \left(s\right)^2}\left[m\left(\vartheta
  _0,s\right)- m(\hat\vartheta _{\tau,\varepsilon  },s )   \right] {\rm d}s.
\end{align*}

 We have the relations (Lemma 6 in \cite{Kut19})
\begin{align*}
&m(\hat\vartheta _{\tau,\varepsilon },s )-m\left(\vartheta _0,s\right)=
(\hat\vartheta _{\tau,\varepsilon }-\vartheta _0)\dot
m(\tilde\vartheta _{\tau,\varepsilon },s ),\\
&m(\hat\vartheta _{\tau,\varepsilon },s )-m\left(\vartheta _0,s\right)=
(\hat\vartheta _{\tau,\varepsilon }-\vartheta _0)\dot
m(\vartheta _0,s )+\frac{1}{2}(\hat\vartheta
_{\tau,\varepsilon }-\vartheta _0)^2\ddot
m(\tilde\vartheta _{\tau,\varepsilon },s ),\\
&\dot m(\vartheta _0,s )=\sqrt{{\frac{\varepsilon \sigma
    \left(s\right)}{2b\left(\vartheta _0,s\right)  f\left(s\right)  }}}\;  \dot
    b\left(\vartheta _0,s\right)
  \xi _{s,\varepsilon }+{\varepsilon
}\; R_{t,\varepsilon },\\
&{\rm I}_\tau^t (\hat\vartheta _{\tau,\varepsilon }) ^{-1}={\rm
    I}_\tau^t (\vartheta _0 ) ^{-1}+(\hat\vartheta
_{\tau,\varepsilon }-\vartheta _0)\;Q_{t,\varepsilon }.
\end{align*}
Here $ \xi _{s,\varepsilon },s \in \left[\tau ,T\right]$ are Gaussian, asymptotically
independent random variables, i.e.,  $\xi _{s,\varepsilon }\Longrightarrow \xi _s\sim
{\cal N}\left(0,1\right)$, where $\xi _{s },s\in \left[\tau ,T\right]$ are
mutually independent. 

For any $\nu >0$ we can write 
\begin{align*}
&\Pb_{\vartheta _0}\left(\sup_{t_0\leq t\leq T}\left|\vartheta _{t,\varepsilon
}^\star-\vartheta _0 \right|>\nu \right)\leq \Pb_{\vartheta _0}\left(
\left|\hat\vartheta _{\tau,\varepsilon }-\vartheta _0\right|\geq \frac{\nu
}{3}\right)\\
&\qquad + \Pb_{\vartheta _0}\left(
{\rm I}_\tau^{t_0} (\hat\vartheta _{\tau,\varepsilon })^{-1}\left|\int_{\tau
}^{T}\frac{f\left(s\right)\dot m(\hat\vartheta _{\tau,\varepsilon },s)}{
  \sigma \left(s\right)} {\rm d}\bar W_s\right|\geq \frac{\nu
}{3}\right)\\
&\qquad + \Pb_{\vartheta _0}\left( \frac{
\left|\hat\vartheta _{\tau,\varepsilon }-\vartheta _0\right|  }{ {\rm
    I}_\tau^{t_0} (\hat\vartheta _{\tau,\varepsilon })} \int_{\tau
}^{T}\frac{f\left(s\right)^2\left|\dot m(\hat\vartheta _{\tau,\varepsilon },s)\dot
  m(\tilde\vartheta _{\tau,\varepsilon },s)\right|}{ \sqrt{\varepsilon }
  \sigma \left(s\right)^2} {\rm d}s\geq \frac{\nu
}{3}\right). 
\end{align*}
Now the convergence \eqref{2-11} follows from the consistency of
$\hat\vartheta _{\tau,\varepsilon }  $ 
and the following estimate of the moments of $\dot m\left(\cdot ,\cdot  
\right)$: for any $p>0$ 
\begin{align}
\label{2-13}
\sup_{\vartheta _0\in \Theta }\sup_{\tau \leq t\leq T}\Ex_{\vartheta
  _0}\left|\dot m\left(\vartheta _0 ,t\right)\right|^p\leq  C\varepsilon ^{p/2}.
\end{align}
The proof of this estimate follows from the proof of Lemma 6 in \cite{Kut19}. 

Moreover, we have the convergence results
\begin{align*}
\int_{\tau}^{t}\frac{f\left(s\right)^2\dot m(\hat\vartheta _{\tau,\varepsilon
  },s)^2}{ {\varepsilon 
    }\sigma \left(s\right)^2} {\rm d}s&=\int_{\tau}^{t 
}\frac{f\left(s\right)\dot b\left(\vartheta _0  ,s\right)^2\xi _{s,\varepsilon
  }^2}{2b\left(\vartheta _0  ,s\right)\sigma 
  \left(s\right)}\;{\rm d}s\left(1+o\left(1\right)\right)\longrightarrow {\rm
    I}_\tau^t (\vartheta _0 ),\\
 \int_{\tau
}^{t}\frac{f\left(s\right)\dot m(\hat\vartheta _{\tau,\varepsilon },s)}{ \sqrt{\varepsilon
    }\sigma \left(s\right)} {\rm d}\bar W_s&= \int_{\tau
}^{t}\frac{\sqrt{f\left(s\right)}\;\dot b(\vartheta _0,s)\,\xi _{s,\varepsilon
 }}{\sqrt{2b(\vartheta _0,s)  
    \sigma \left(s\right)}} {\rm d}\bar W_s\left(1+o\left(1\right)\right)\\
&\qquad \qquad\qquad \qquad\qquad \qquad \Longrightarrow {\cal N}\left(0,{\rm
    I}_\tau^t (\vartheta _0 ) \right).
\end{align*}
The random processes $R_{t,\varepsilon },Q_{t,\varepsilon }$ have bounded
polynomial moments. 

Hence we can write the representation
\begin{align}
\label{2-14}
&\frac{\vartheta _{t,\varepsilon }^\star -\vartheta _0}{\sqrt{\varepsilon }}
=\frac{\hat\vartheta _{\tau,\varepsilon }-\vartheta _0}{ \sqrt{\varepsilon
}}+\frac{1}{{\rm I}_\tau^t (\vartheta _0 )} \int_{\tau }^{t}\frac{\dot
  b(\vartheta _0,s)\sqrt{f\left(s\right)}\,\xi _{s,\varepsilon
}}{\sqrt{2b(\vartheta _0,s) \sigma 
  \left(s\right)}} {\rm d}\bar W_s+o\left(1\right)\nonumber\\ 
&\qquad \quad \qquad -\frac{(\hat\vartheta
  _{\tau,\varepsilon }-\vartheta _0)}{ \sqrt{\varepsilon }} \;\frac{1}{{\rm
    I}_\tau^t (\vartheta _0 )} \int_{\tau}^{t 
}\frac{\dot b\left(\vartheta _0 ,s\right)^2f\left(s\right)\,\xi _{s,\varepsilon
  }^2}{2b\left(\vartheta _0 ,s\right)\sigma \left(s\right)}\;{\rm
  d}s+\frac{(\hat\vartheta 
  _{\tau,\varepsilon }-\vartheta _0)^2}{ \sqrt{\varepsilon }}P_{t,\varepsilon }\nonumber\\ 
&\quad =\frac{1}{{\rm I}_\tau^t (\vartheta _0 )} \int_{\tau }^{t}\frac{\dot
  b(\vartheta _0,s)\sqrt{f\left(s\right)}\xi _{s,\varepsilon
}}{\sqrt{2b(\vartheta _0,s) \sigma 
  \left(s\right)}}\; {\rm d}\bar W_s+o\left(1\right)+\left(\frac{\hat\vartheta
  _{\tau,\varepsilon }-\vartheta _0}{ \sqrt{\varepsilon
}}\right)^2P_{t,\varepsilon }\sqrt{\varepsilon } .
\end{align}
The random processes $P_{t,\varepsilon }$ has bounded polynomial moments. From
this representation it follows that the One-step MLE-process is asymptotically
normal: for all $t\in (\tau ,T]$
\begin{align*}
\frac{\vartheta _{t,\varepsilon }^\star -\vartheta _0}{\sqrt{\varepsilon
}}\Longrightarrow \eta _t \sim {\cal N}\left(0,{\rm I}_\tau^t (\vartheta _0 )^{-1} \right).
\end{align*}
The representation \eqref{2-14} allows us to obtain the convergence of the finite
dimensional distributions as well
\begin{align}
\label{2-15}
\left(\eta _{t_1,\varepsilon },\ldots,\eta _{t_k,\varepsilon
}\right)\Longrightarrow \left(\eta _{t_1 },\ldots,\eta _{t_k  }\right),
\end{align}
for any $k\geq 2$ and any $t_0\leq t_1<\ldots <t_k\leq T$. 

Let us verify the condition
\begin{align}
\label{2-16}
\Ex_{\vartheta _0}\left|\eta _{t_1,\varepsilon }-\eta _{t_2,\varepsilon
}\right|^4\leq C\left|t_1-t_2\right|^2 
\end{align} 
which along with the convergence  \eqref{2-15} provide the weak
convergence  \eqref{2-12} of
the random process $\eta _{t,\varepsilon },t_0\leq t\leq T$. We introduce
\begin{align*}
J_1\left(t\right)&=\int_{\tau }^{t}\frac{f\left(s\right)\dot m(\hat\vartheta
  _{\tau ,\varepsilon },s)}{\sqrt{\varepsilon }\sigma \left(s\right)}
{\rm d}\bar W_s,\qquad K\left(t\right)={\rm I}_\tau^t (\hat\vartheta
  _{\tau ,\varepsilon } )^{-1},\\
J_2\left(t\right)&=\int_{\tau }^{t}\frac{f\left(s\right)^2\dot m(\vartheta
  _{\tau ,\varepsilon },s)[m(\vartheta  _{0 },s)-m(\hat\vartheta 
  _{\tau ,\varepsilon },s) ]}{{\varepsilon }\sigma \left(s\right)^2}{\rm d}s
\end{align*}
Then we can write
\begin{align*}
\Ex_{\vartheta _0}\left|\eta _{t_1,\varepsilon }-\eta _{t_2,\varepsilon
}\right|^4
&\leq C\Ex_{\vartheta _0}\left|K\left(t_1\right)
J_1\left(t_1\right)-K\left(t_2\right) J_1\left(t_2\right)
\right|^4\\
&\qquad +C\Ex_{\vartheta _0}\left|K\left(t_1\right)
J_2\left(t_1\right)-K\left(t_2\right) J_2\left(t_2\right)
\right|^4\\
&\leq C\Ex_{\vartheta _0}\left|\left(K\left(t_1\right)
-K\left(t_2\right)\right) J_1\left(t_1\right)
\right|^4\\
&\qquad +  C\Ex_{\vartheta _0}\left|
\left(J_1\left(t_1\right)- J_1\left(t_2\right)\right) K\left(t_2\right)
\right|^4\\
&\qquad +C\Ex_{\vartheta _0}\left|\left(K\left(t_1\right)
-K\left(t_2\right)\right) J_2\left(t_1\right)
\right|^4\\
&\qquad +  C\Ex_{\vartheta _0}\left|
\left(J_2\left(t_1\right)- J_2\left(t_2\right)\right) K\left(t_2\right)
\right|^4.
\end{align*}
Using once again the estimates \eqref{2-13} we obtain
\begin{align*}
\Ex_{\vartheta _0}\left|K\left(t_1\right) -K\left(t_2\right) \right|^8&\leq
C\left|t_2-t_1\right|^8,\qquad  \Ex_{\vartheta _0}\left|J_1\left(t_2\right)
\right|^8\leq C,\\
\Ex_{\vartheta _0}\left|
\left(J_1\left(t_1\right)- J_1\left(t_2\right)\right) 
\right|^8&\leq  C\left|t_2-t_1\right|^4,\qquad  \Ex_{\vartheta _0}\left|K\left(t_2\right)
\right|^8\leq C,\\ \Ex_{\vartheta _0}\left|
\left(J_2\left(t_1\right)- J_2\left(t_2\right)\right)
\right|^8&\leq C\left|t_2-t_1\right|^8.
\end{align*}
These estimates and the Cauchy-Schwartz inequality allow us to verify \eqref{2-16} and
therefore to obtain \eqref{2-12}.

\end{proof}

\subsection{Approximation}

Consider the family of solutions $u\left(t,y,\vartheta,\varepsilon \right),\vartheta
\in\Theta, \varepsilon \in (0,1] $ of the equations
\begin{align}
\label{3-1}
u'_t-a\left(t\right)y\,u'_y+\frac{ 1 }{2}B_\varepsilon \left(\vartheta
,t\right)^2 u''_{yy}&=-F\left(t,y,u,B_\varepsilon \left(\vartheta
,t\right)u'_y
\right),\nonumber\\ u\left(T,y,\vartheta,\varepsilon\right)&=\Phi\left(y\right).
\end{align}
and the equation
\begin{align}
\label{3-2}
U'_t-a\left(t\right)y\,U'_y+\frac{ 1 }{2}b \left(\vartheta
,t\right)^2 U''_{yy}&=-F\left(t,y,U,b \left(\vartheta
,t\right)U'_y
\right),\nonumber\\ U\left(T,y,\vartheta\right)&=\Phi\left(y\right).
\end{align}

We suppose that by continuity $U\left(t,y,\vartheta
\right)=u\left(t,y,\vartheta,0 \right)$. Recall that $B_\varepsilon
\left(\vartheta ,t\right)\rightarrow b \left(\vartheta ,t\right) $ as
$\varepsilon \rightarrow 0$. 

{\it  Conditions ${\cal C}$}

${\cal C}_1$. {\it The functions $F\left(t,y,u,s\right) $ and $\Phi
  \left(y\right)$ satisfy the conditions \eqref{1-5}, \eqref{1-6}. }

${\cal C}_2$. {\it The function  $u \left(t,y,\vartheta,\varepsilon
  \right),t\in(0,T],y\in {\cal R},\vartheta \in\Theta ,\varepsilon \in
\left[0,1\right]$ has continuous derivatives $u'_y\left(\cdot \right),\dot u
\left(\cdot \right),u'_\varepsilon \left(\cdot \right) $. }

\bigskip

It is worth noting that $Z_t=u \left(t,m\left(\vartheta _0,t\right),\vartheta
_0,\varepsilon\right)$ is a solution of BSDE
\begin{align*}
{\rm d}Z_t=-F\left(t,m\left(\vartheta
_0,t\right),Z_t,s\left(t\right)\right){\rm d}t+s\left(t\right){\rm
  d}\bar W_t,\qquad Z_T=\Phi \left(m\left(\vartheta _0,T\right)\right),
\end{align*}
where $s\left(t\right)=B_\varepsilon \left(\vartheta _0,t\right)u'
\left(t,m\left(\vartheta _0,t\right),\vartheta _0,\varepsilon \right)$. As $u
\left(\cdot \right)\rightarrow U \left(\cdot \right)$ the corresponding
limit BSDE is
\begin{align*}
{\rm d}Z_t=-F\left(t,Y_t,Z_t,s_t\right){\rm d}t+s_t{\rm
  d}V_t,\qquad Z_T=\Phi \left(Y_T\right),
\end{align*}
where $Z_t=U\left(t,Y_t,\vartheta _9\right), s_t=b\left(\vartheta
_0,t\right)U_y'\left(t,Y_t,\vartheta _0 \right) $. 

We do not set $m\left(\vartheta _{t,\varepsilon }^\star,t\right) $ and $\hat
Z_t=u \left(t,m\left(\vartheta _{t,\varepsilon }^\star,t\right),\vartheta
_{t,\varepsilon }^\star,\varepsilon\right) $ since in this case we need
to solve the equations \eqref{2-6a} for many values of $\vartheta $ and
the relevant computational cost is very high. Introduce the recurrent equation
\begin{align}
\label{3-4}
{\rm d}\hat m _t=-q_\varepsilon \left(\vartheta _{t,\varepsilon
}^\star,t\right)\hat m _t\,{\rm d}t+\varepsilon ^{-1} A_\varepsilon \left(\vartheta
_{t,\varepsilon }^\star,t\right) {\rm d}X_t ,\quad \tau < t\leq T
\end{align}
where the initial value is  $\hat m _\tau
=m(\hat\vartheta _{\tau ,\varepsilon },\tau )  $.

Let us set
\begin{align*}
\hat Z_t=u \left(t,\hat m _t,\vartheta _{t,\varepsilon }^\star,\varepsilon\right),\qquad \hat
s_t=B_\varepsilon \left(\vartheta _{t,\varepsilon
}^\star,t\right)u_y'\left(t,\hat m _t,\vartheta _{t,\varepsilon }^\star,\varepsilon\right) 
\end{align*}

The main result of this work is the following theorem.
\begin{theorem}
\label{T1} Let the conditions  ${\cal A}$,${\cal B}$,${\cal C}$  be fulfilled. Then 
\begin{align}
\label{3-5}
\frac{\hat Z_t-Z_t}{\sqrt{\varepsilon }}&\Longrightarrow U'_y\left(t,Y_t,\vartheta
_0\right) \sqrt{\frac{b\left(\vartheta 
  _0,t\right)\sigma \left(t\right)}{2f\left(t\right)}} \,\left[\hat\zeta
_{t }-\hat\xi _{t
}\right]\, \nonumber \\
&\qquad \qquad +\frac{\dot  U\left(t,Y_t,\vartheta _0\right)}{{\rm I}_\tau
  ^t\left(\vartheta _0\right) } \;\int_{\tau 
}^{t}\sqrt{\frac{{}\;\dot b(\vartheta _0,s)^2f\left(s\right)}{{2b(\vartheta _0,s)  
    \sigma \left(s\right)}}}\,\, {\rm d}w\left(s\right). 
\end{align}
Here $\hat\zeta _{t }\sim {\cal N}\left(0,1\right),\hat\xi _{t }\sim {\cal
  N}\left(0,1\right) $ are mutually independent random variables and
  $w\left(s\right),\tau \leq s\leq T$ is a Wiener process. 

\end{theorem}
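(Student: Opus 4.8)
The plan is to linearise the error
$\hat Z_t-Z_t=u\bigl(t,\hat m_t,\vartheta^\star_{t,\varepsilon},\varepsilon\bigr)-U\bigl(t,Y_t,\vartheta_0\bigr)$
and to identify the limit of each resulting term with the help of Lemmas \ref{L1}, \ref{L2} and Proposition \ref{P3}. Using $U(t,y,\vartheta_0)=u(t,y,\vartheta_0,0)$ and inserting the intermediate points $u(t,m(\vartheta_0,t),\vartheta^\star_{t,\varepsilon},\varepsilon)$, $u(t,m(\vartheta_0,t),\vartheta_0,\varepsilon)$ and $u(t,Y_t,\vartheta_0,\varepsilon)$, one obtains a telescoping sum of four brackets $A_\varepsilon+B_\varepsilon+C_\varepsilon+D_\varepsilon$; by condition ${\cal C}_2$ the mean value theorem turns them into $u'_y(\cdot)\bigl(\hat m_t-m(\vartheta_0,t)\bigr)$, $\dot u(\cdot)\bigl(\vartheta^\star_{t,\varepsilon}-\vartheta_0\bigr)$, $u'_y(\cdot)\bigl(m(\vartheta_0,t)-Y_t\bigr)$ and $u'_\varepsilon(\cdot)\,\varepsilon$, the derivatives being evaluated at intermediate arguments. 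Since $m(\vartheta_0,t)\to Y_t$ (Lemma \ref{L2}), $\vartheta^\star_{t,\varepsilon}\to\vartheta_0$ (Proposition \ref{P3}) and $u(\cdot,\cdot,\cdot,\varepsilon)$ converges to $U(\cdot,\cdot,\cdot)$ together with its derivatives as $\varepsilon\to0$, all these intermediate arguments converge and the limiting coefficients are $U'_y(t,Y_t,\vartheta_0)$ and $\dot U(t,Y_t,\vartheta_0)$.

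The term $D_\varepsilon=u'_\varepsilon(\cdot)\,\varepsilon=O(\varepsilon)=o(\sqrt\varepsilon)$ disappears after division by $\sqrt\varepsilon$. For $A_\varepsilon$ the plan is to prove $\Delta_t:=\hat m_t-m(\vartheta_0,t)=O(\varepsilon)$ in probability. Subtracting \eqref{2-6a} taken at $\vartheta_0$ from \eqref{3-4} and substituting ${\rm d}X_t=f(t)m(\vartheta_0,t){\rm d}t+\varepsilon\sigma(t){\rm d}\bar W_t$, the identity $q_\varepsilon(\vartheta,t)=a(t)+\varepsilon^{-1}A_\varepsilon(\vartheta,t)f(t)$ makes the drift terms proportional to $m(\vartheta_0,t)$ cancel exactly, leaving
\begin{align*}
{\rm d}\Delta_t=-q_\varepsilon(\vartheta^\star_{t,\varepsilon},t)\,\Delta_t\,{\rm d}t+\bigl[A_\varepsilon(\vartheta^\star_{t,\varepsilon},t)-A_\varepsilon(\vartheta_0,t)\bigr]\sigma(t)\,{\rm d}\bar W_t,\qquad \Delta_\tau=m(\hat\vartheta_{\tau,\varepsilon},\tau)-m(\vartheta_0,\tau).
\end{align*}
By ${\cal A}_2$ and Lemma \ref{L1} one has $q_\varepsilon(\vartheta^\star_{s,\varepsilon},s)\ge c/\varepsilon$ on the event of consistency, and $|A_\varepsilon(\vartheta^\star_{s,\varepsilon},s)-A_\varepsilon(\vartheta_0,s)|\le L|\vartheta^\star_{s,\varepsilon}-\vartheta_0|=O(\sqrt\varepsilon)$ uniformly on $[t_0,t]$ by Proposition \ref{P3}; the variation-of-constants formula then gives a super-exponentially small initial-layer contribution together with a stochastic integral of conditional variance $O(\varepsilon)\int_\tau^t e^{-2\int_s^t q_\varepsilon}\,{\rm d}s=O(\varepsilon^2)$, whence $\Delta_t=O(\varepsilon)$ and $A_\varepsilon/\sqrt\varepsilon\to0$ in probability.

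Next I turn to the filtering term $C_\varepsilon/\sqrt\varepsilon=u'_y(t,\bar m,\vartheta_0,\varepsilon)\,\delta_t/\sqrt\varepsilon$, where $\delta_t=m(\vartheta_0,t)-Y_t$ and $\bar m$ lies between $m(\vartheta_0,t)$ and $Y_t$. Here the Gaussianity of the model is decisive: since $m(\vartheta_0,t)=\Ex_{\vartheta_0}(Y_t\mid{\cal F}^X_t)$, the error $\delta_t$ is independent of ${\cal F}^X_t$ and, for every $\varepsilon$, distributed as ${\cal N}(0,\gamma(\vartheta_0,t))$, so $\delta_t/\sqrt\varepsilon\sim{\cal N}(0,\gamma_*(\vartheta_0,t))$ with $\gamma_*(\vartheta_0,t)\to\gamma_0(\vartheta_0,t)=b(\vartheta_0,t)\sigma(t)/f(t)$ by Lemma \ref{L1}. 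Splitting the representation \eqref{2-6e} of $\delta_t$ into its $V$-integral and its $W$-integral — two independent centred Gaussians, each of variance tending to $\varepsilon\,b(\vartheta_0,t)\sigma(t)/(2f(t))$ — exhibits the limit as $\sqrt{b(\vartheta_0,t)\sigma(t)/(2f(t))}\,(\hat\zeta_t-\hat\xi_t)$ with $\hat\zeta_t,\hat\xi_t$ i.i.d.\ ${\cal N}(0,1)$; moreover $\bar m\to Y_t$ in probability, so $u'_y(t,\bar m,\vartheta_0,\varepsilon)\to U'_y(t,Y_t,\vartheta_0)$ in probability.

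Finally $B_\varepsilon/\sqrt\varepsilon=\dot u(t,m(\vartheta_0,t),\bar\vartheta,\varepsilon)\,\eta_{t,\varepsilon}$; Proposition \ref{P3} gives $\eta_{t,\varepsilon}\Rightarrow\eta_t$ while $\dot u(\cdot)\to\dot U(t,Y_t,\vartheta_0)$ in probability, so $B_\varepsilon/\sqrt\varepsilon\Rightarrow\dot U(t,Y_t,\vartheta_0)\,\eta_t$. To combine the four pieces I would condition on ${\cal F}^X_t$: the coefficients of $B_\varepsilon$ and $C_\varepsilon$ and the factor $\eta_{t,\varepsilon}$ are (up to terms negligible in probability) ${\cal F}^X_t$-measurable, whereas $\delta_t/\sqrt\varepsilon$ is independent of ${\cal F}^X_t$ with an asymptotically deterministic conditional variance; this yields the joint convergence with $(\hat\zeta_t,\hat\xi_t)$ independent of $(\eta_t,Y_t)$, and a continuous-mapping argument using the joint convergence already contained in Lemmas \ref{L1}, \ref{L2} and Propositions \ref{P1}, \ref{P3} assembles, for each fixed $t\in(\tau,T]$,
\begin{align*}
\frac{\hat Z_t-Z_t}{\sqrt\varepsilon}\Longrightarrow U'_y(t,Y_t,\vartheta_0)\sqrt{\frac{b(\vartheta_0,t)\sigma(t)}{2f(t)}}\bigl(\hat\zeta_t-\hat\xi_t\bigr)+\frac{\dot U(t,Y_t,\vartheta_0)}{{\rm I}^t_\tau(\vartheta_0)}\int_\tau^t\sqrt{\frac{\dot b(\vartheta_0,s)^2f(s)}{2b(\vartheta_0,s)\sigma(s)}}\,{\rm d}w(s),
\end{align*}
which is \eqref{3-5} once $\eta_t$ is written in integral form. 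I expect the main obstacle to be precisely this last step: the coefficients $U'_y(t,Y_t,\vartheta_0)$ and $\dot U(t,Y_t,\vartheta_0)$ are random and converge only in probability, so one must establish their \emph{joint} convergence with the weakly convergent Gaussian limits, not merely their marginal behaviour; this is where the independence of $\delta_t$ from ${\cal F}^X_t$ and the uniform moment bounds \eqref{2-13} have to be exploited carefully. (The other delicate routine point is the uniform control $\sup_{[t_0,t]}|\vartheta^\star_{s,\varepsilon}-\vartheta_0|=O(\sqrt\varepsilon)$ needed in the estimate for $\Delta_t$, which follows from the representation \eqref{2-14} and \eqref{2-13}.)
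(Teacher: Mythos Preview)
Your proposal is correct and follows essentially the same route as the paper: the identical four-term telescoping (the paper's display \eqref{3-5a}), the same linear SDE for $\hat m_t-m(\vartheta_0,t)$ solved by variation of constants to show this term is $O(\varepsilon)$ (the paper's \eqref{3-6}), the same splitting of $(m(\vartheta_0,t)-Y_t)/\sqrt\varepsilon$ via \eqref{2-6e} into its $W$- and $V$-parts with asymptotic variances $\varepsilon\,b\sigma/(2f)$ each, and Proposition~\ref{P3} for the estimator term. If anything, your treatment of the joint-convergence issue (conditioning on ${\cal F}^X_t$ and exploiting the independence of the Kalman error from that $\sigma$-field) is more explicit than the paper's, which simply records the representation \eqref{3-7} with $(1+o(1))$ factors and passes directly to the limit.
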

\begin{proof} First we notice
\begin{align}
\label{3-5a}
\hat Z_t- Z_t&=u \left(t,\hat m _t,\vartheta _{t,\varepsilon }^\star,\varepsilon\right)-U
\left(t,Y_t,\vartheta _0\right)\nonumber\\
&=u \left(t,\hat m _t,\vartheta _{t,\varepsilon }^\star,\varepsilon\right)- u
\left(t,m\left(\vartheta _0,t\right),\vartheta _{t,\varepsilon
}^\star,\varepsilon\right)\nonumber\\ 
&\qquad \quad +
 u
\left(t,m\left(\vartheta _0,t\right),\vartheta _{t,\varepsilon
}^\star,\varepsilon\right)-u 
\left(t,m\left(\vartheta _0,t\right),\vartheta _0,\varepsilon\right)\nonumber\\
&\qquad \quad +  u 
\left(t,m\left(\vartheta _0,t\right),\vartheta _0,\varepsilon\right)-u 
\left(t,Y_t,\vartheta _0,\varepsilon\right)\nonumber\\
&\qquad \quad + u 
\left(t,Y_t,\vartheta _0,\varepsilon\right)-u
\left(t,Y_t,\vartheta _0,0\right)\nonumber\\
&=u_y' \left(t,\tilde m_t,\vartheta _{t,\varepsilon }^\star,\varepsilon
\right)\left(\hat m _t-m\left(\vartheta _0,t\right) \right)\nonumber\\
&\qquad \quad +u_y' \left(t,\bar m_t,\vartheta _{t,\varepsilon }^\star,\varepsilon
\right)\left(m\left(\vartheta _0,t\right)-Y_t \right)\nonumber\\
&\qquad \quad +
\dot u(t,m\left(\vartheta _0,t\right),\tilde \vartheta ,\varepsilon)
\left(\vartheta _{t,\varepsilon }^\star-\vartheta _0 \right) +u'_\varepsilon
(t,Y_t,\vartheta ,\tilde \varepsilon)\varepsilon  . 
\end{align}
Here $\tilde m_t,\bar m_t,\tilde \vartheta $ are some intermediate points in
the corresponding expansions. We further study the quantities $\delta _t=\hat m
_t-m\left(\vartheta _0,t\right) $ and $m\left(\vartheta _0,t\right)-Y
_t $. Recall the equations for $m\left(\vartheta _0,t\right)$ and $\hat m_t$
\begin{align*}
&{\rm d}m\left(\vartheta _0,t\right)=-a\left(t\right)m\left(\vartheta
  _0,t\right){\rm d}t+\frac{\gamma 
    _*\left(\vartheta _0,t\right)f\left(t\right)}{ \sigma
    \left(t\right)}{\rm d}\bar W_t,\quad m\left(\vartheta
  _0,0\right)=0,\\
 &{\rm d}\hat m_t=-a\left(t\right)\hat m_t{\rm d}t-\frac{\gamma
    _*\left(\vartheta_{t,\varepsilon
    }^\star,t\right)f\left(t\right)^2}{\varepsilon \sigma
    \left(t\right)^2}\;\delta _t\;{\rm d}t+\frac{\gamma
    _*\left(\vartheta_{t,\varepsilon
    }^\star,t\right)f\left(t\right)}{ \sigma \left(t\right)}{\rm
    d}\bar W_t, 
\end{align*}
where $\hat m_\tau = m\left(\vartheta _{t,\varepsilon
  }^\star,\tau \right)$.
 Therefore for $\delta _t$ we obtain the equation
\begin{align*}
{\rm d}\delta _t&=-q_\varepsilon  \left(\vartheta _{t,\varepsilon }^\star,t\right)\delta
_t{\rm d}t +\frac{\left[\gamma
    _*\left(\vartheta_{t,\varepsilon
    }^\star,t\right)-\gamma
    _*\left(\vartheta_0,t\right)\right]f\left(t\right)^2}{ \sigma
    \left(t\right)^2}{\rm d}\bar W_t,\qquad
 \tau <t\leq T ,
\end{align*}
where $\delta _\tau =m(\hat\vartheta _{\tau ,\varepsilon
  }^\star,\tau )-m\left(\vartheta
_0,\tau \right)$  and
\begin{align*}
q_\varepsilon \left(\vartheta _{t,\varepsilon
}^\star,t\right)=a\left(t\right)+\frac{\gamma _*\left(\vartheta_{t,\varepsilon
  }^\star,t\right)f\left(t\right)^2}{\varepsilon \sigma \left(t\right)^2}.
\end{align*}

The solution of this equation on the time interval  $\left[\tau ,T\right]$ is 
\begin{align*} 
\delta _t&=\delta _\tau  e^{-\int_{\tau}^{t}q_\varepsilon \left(\vartheta
  _{v,\varepsilon }^\star ,v\right){\rm
    d}v}\\
&\qquad  +e^{-\int_{\tau}^{t}q_\varepsilon \left(\vartheta
  _{v,\varepsilon }^\star ,v\right){\rm
    d}v}\int_{\tau}^{t}e^{\int_{\tau}^{s}q_\varepsilon \left(\vartheta
  _{v,\varepsilon }^\star ,v\right){\rm d}v}\frac{\left[\gamma
    _*\left(\vartheta_{s,\varepsilon }^\star,s\right)-\gamma
    _*\left(\vartheta_0,s\right)\right]f\left(s\right)^2}{ \sigma
  \left(s\right)^2}{\rm d}\bar W_s.
\end{align*}
Note that at the vicinity of the point $t$ we have the expansion
\begin{align*}
q_\varepsilon  \left(\vartheta _{s,\varepsilon
}^\star,s\right)=\frac{1}{\varepsilon }\frac{\gamma 
    _*\left(\vartheta_0,t\right)f\left(t\right)^2}{ \sigma
    \left(t\right)^2}\left(1+ O\left(\varepsilon \right)+
O\left(s-t\right)+O\left(\sqrt{\varepsilon }   \right)
  \right), 
\end{align*}
where the relation $\vartheta _{s,\varepsilon }^\star-\vartheta _0
=O\left(\sqrt{\varepsilon }\right) $ is used. Let us denote $K\left(\vartheta
,t\right)={\gamma _*\left(\vartheta_0,t\right)f\left(t\right)^2}{ \sigma
  \left(t\right)^{-2}}$ and notice that
\begin{align*}
q_\varepsilon \left(\vartheta _{v,\varepsilon }^\star ,v\right)=q_\varepsilon
\left(\vartheta _0 ,v\right)\left(1 +O\left({\sqrt{\varepsilon }}\right)\right).
\end{align*}
 The same arguments as in the proof of Lemma 2 in
\cite{Kut19} for the stochastic integral lead us to
\begin{align*}
&e^{-\int_{\tau}^{t}q_\varepsilon \left(\vartheta _{v,\varepsilon }^\star ,v\right){\rm
    d}v}\int_{\tau}^{t}e^{\int_{\tau}^{s}q_\varepsilon \left(\vartheta _{v,\varepsilon }^\star ,v\right){\rm
    d}v}\frac{\left[\gamma
    _*\left(\vartheta_{s,\varepsilon
    }^\star,s\right)-\gamma
    _*\left(\vartheta_0,s\right)\right]f\left(s\right)^2}{ \sigma
    \left(s\right)^2}{\rm d}\bar W_s\\
&\quad =\int_{\tau}^{t}e^{-\int_{s}^{t}q_\varepsilon \left(\vartheta _0 ,v\right){\rm
    d}v}\frac{\dot\gamma
    _*\left(\vartheta_0,s\right)\left[\vartheta_{s,\varepsilon
    }^\star-\vartheta_0\right]f\left(s\right)^2}{ \sigma
    \left(s\right)^2}{\rm d}\bar W_s\left(1+o\left(1\right)\right)\\
&\quad =\int_{\tau}^{t}e^{-\frac{1}{\varepsilon }K\left(\vartheta
    _0,t\right)\left(t-s\right)}\frac{\dot\gamma 
    _*\left(\vartheta_0,s\right)\;\eta_{s,\varepsilon 
    }^\star\; f\left(s\right)^2}{ \sigma
    \left(s\right)^2}{\rm d}\bar W_s\sqrt{\varepsilon
  }\left(1+o\left(1\right)\right)\\ 
&\quad =   \frac{\dot\gamma
    _*\left(\vartheta_0,t\right)\;\eta_{t,\varepsilon
    }^\star\; f\left(t\right)^2}{ \sigma
    \left(t\right)^2 \sqrt{2K\left(\vartheta _0,t\right)} } \;\zeta
  _{t,\varepsilon } \;\varepsilon   \left(1+o\left(1\right)\right)\\ 
&\quad=   \frac{\dot\gamma
    _*\left(\vartheta_0,t\right)\;\eta_{t,\varepsilon
    }^\star\; f\left(t\right)}{ \sigma
    \left(t\right) \sqrt{2\gamma _*\left(\vartheta _0,t\right)} } \;\zeta
  _{t,\varepsilon }\; \varepsilon   \left(1+o\left(1\right)\right) . 
\end{align*}
Here $\zeta _{t,\varepsilon }\sim {\cal N}\left(0,1\right), t\in (\tau ,T]$
  are independent random variables. We remind that by Lemma \ref{L1} we have
\begin{align*}
\gamma _*\left(\vartheta _0,t\right)\longrightarrow \frac{b\left(\vartheta
  _0,t\right)\sigma \left(t\right)}{f\left(t\right)} ,\qquad \dot \gamma
_*\left(\vartheta _0,t\right)\longrightarrow \frac{\dot b\left(\vartheta
  _0,t\right)\sigma \left(t\right)}{f\left(t\right)}.
\end{align*}
The second limit here could be obtained similarly to the first one in Lemma 2
\cite{Kut19}. 

For the initial value we have 
\begin{align*}
\delta _\tau &=\dot m(\tilde\vartheta,\tau )(\hat\vartheta _{\tau ,\varepsilon
} -\vartheta _0)e^{-\int_{\tau}^{t}q_\varepsilon \left(\vartheta _{v,\varepsilon }^\star ,v\right){\rm
    d}v}\\
&=\dot m\left(\vartheta _0,\tau \right)(\hat\vartheta _{\tau
  ,\varepsilon } -\vartheta _0)\;
e^{-\int_{\tau}^{t}q_\varepsilon \left(\vartheta
  _0 ,v\right){\rm     d}v}\left(1+o\left(1\right)\right)\\ &=\dot
m\left(\vartheta _0,\tau \right) \hat\eta _{\tau ,\varepsilon
}\;e^{-\frac{1}{\varepsilon }\int_{\tau}^{t}K \left(\vartheta 
  _0 ,v\right){\rm
    d}v}\;\sqrt{\varepsilon}\;\left(1+o\left(1\right)\right)=O\left(e^{-\frac{c_*}{\varepsilon
  }\left(t-\tau \right)}\right),
\end{align*}
where $c_*=\inf_{\tau <v\leq \tau }K \left(\vartheta 
  _0 ,v\right) $.

Finally, we obtain the representation
\begin{align}
\label{3-6}
\hat m_t-m\left(\vartheta _0,t\right)=\sqrt{\frac{\dot b\left(\vartheta
    _0,t\right)^2f\left(t\right)}{2b\left(\vartheta
    _0,t\right)\sigma \left(t\right) }} \;\eta_{t,\varepsilon 
    }^\star \;\zeta
  _{t,\varepsilon }\; \varepsilon   \left(1+o\left(1\right)\right).
\end{align}

For the difference $m\left(\vartheta _0,t\right)-Y_t$ we have the
representation \eqref{2-6e}
\begin{align*}
m\left(\vartheta _0,t\right)-Y_t&=\int_{0}^{t}e^{-\frac{1}{\varepsilon
  }K\left(\vartheta _0,t\right) \left(t-s\right)} \frac{\gamma
  _*\left(\vartheta _0,s\right)f\left(s\right)}{\sigma \left(s\right)}{\rm
  d}W_s\left(1+o\left(1\right)\right)\\
&\qquad -\int_{0}^{t}e^{-\frac{1}{\varepsilon
  }K\left(\vartheta _0,t\right) \left(t-s\right)} b\left(\vartheta _0,s\right){\rm
  d}V_s\left(1+o\left(1\right)\right)\\
&=\frac{\gamma
  _*\left(\vartheta _0,t\right)f\left(t\right)}{\sigma \left(t\right)}
\int_{0}^{t}e^{-\frac{1}{\varepsilon
  }K\left(\vartheta _0,t\right) \left(t-s\right)} {\rm  
  d}W_s\left(1+o\left(1\right)\right)\\
&\qquad -b\left(\vartheta _0,t\right)\int_{0}^{t}e^{-\frac{1}{\varepsilon
  }K\left(\vartheta _0,t\right) \left(t-s\right)}{\rm
  d}V_s\left(1+o\left(1\right)\right)\\
&=\frac{\gamma
  _*\left(\vartheta _0,t\right)f\left(t\right)}{\sigma
  \left(t\right)\sqrt{2K\left(\vartheta _0,t\right)}} \,\hat\zeta
_{t,\varepsilon }\,  \sqrt{\varepsilon }\;\left(1+o\left(1\right)\right)\\
&\qquad  -\frac{b\left(\vartheta
  _0,t\right)}{\sqrt{2K\left(\vartheta _0,t\right)}} \;\hat\xi _{t,\varepsilon
}\,  \sqrt{\varepsilon }\;\left(1+o\left(1\right)\right) \\
&=\sqrt{\frac{b\left(\vartheta
  _0,t\right)\sigma \left(t\right)}{2f\left(t\right)}} \,\left[\hat\zeta
_{t,\varepsilon }-\hat\xi _{t,\varepsilon
}\right]\,  \sqrt{\varepsilon }\;\left(1+o\left(1\right)\right).
\end{align*}

From the convergences $m\left(\vartheta _0,t\right)\rightarrow Y_t $,
$\vartheta _{t,\varepsilon }^\star \rightarrow \vartheta _0 $ as $\varepsilon
\rightarrow 0$ and the continuity of derivatives we obtain the representation
\begin{align}
\label{3-7}
\frac{\hat Z_t-Z_t}{\sqrt{\varepsilon }}&=u'_y\left(t,Y_t,\vartheta
_0,0\right) \sqrt{\frac{b\left(\vartheta 
  _0,t\right)\sigma \left(t\right)}{2f\left(t\right)}} \,\left[\hat\zeta
_{t,\varepsilon }-\hat\xi _{t,\varepsilon
}\right]\,  \left(1+o\left(1\right)\right)\nonumber\\
&\qquad +\frac{\dot  u\left(t,Y_t,\vartheta _0,0\right)}{{\rm I}_\tau
  ^t\left(\vartheta _0\right) } \;\int_{\tau 
}^{t}\sqrt{\frac{{}\;\dot b(\vartheta _0,s)^2f\left(s\right)}{{2b(\vartheta _0,s)  
    \sigma \left(s\right)}}}\,\xi _{s,\varepsilon} {\rm d}\bar W_s
\;\left(1+o\left(1\right)\right). 
\end{align}
Therefore 
\begin{align*}
\frac{\hat Z_t-Z_t}{\sqrt{\varepsilon }}&\Longrightarrow u'_y\left(t,Y_t,\vartheta
_0,0\right) \sqrt{\frac{b\left(\vartheta 
  _0,t\right)\sigma \left(t\right)}{2f\left(t\right)}} \,\left[\hat\zeta
_{t }-\hat\xi _{t
}\right]\,  \\
&\qquad +\frac{\dot  u\left(t,Y_t,\vartheta _0,0\right)}{{\rm I}_\tau
  ^t\left(\vartheta _0\right) } \;\int_{\tau 
}^{t}\sqrt{\frac{{}\;\dot b(\vartheta _0,s)^2f\left(s\right)}{{2b(\vartheta _0,s)  
    \sigma \left(s\right)}}}\,\, {\rm d}w\left(s\right). 
\end{align*}
Here $\hat\zeta _{t }\sim {\cal N}\left(0,1\right),\hat\xi _{t }\sim {\cal
  N}\left(0,1\right) $ are mutually independent random variables and
$w\left(s\right),0\leq s\leq T$ is the Wiener process.

\end{proof}

Let us introduce the random  process
\begin{align*}
z\left(t,\vartheta _0,Y_t\right)=\frac{\dot  u\left(t,Y_t,\vartheta _0,0\right)}{{\rm I}_\tau
  ^t\left(\vartheta _0\right) } \;\int_{\tau 
}^{t}\sqrt{\frac{{}\;\dot b(\vartheta _0,s)^2f\left(s\right)}{{2b(\vartheta _0,s)  
    \sigma \left(s\right)}}}\,\, {\rm d}w\left(s\right)
\end{align*}
and notice that the Gaussian process $Y_t,0\leq t\leq T$ and Wiener process
$w\left(t\right),0\leq t\leq T$ are independent.  

We will further introduce a new condition.
\bigskip

${\cal D}.$ {\it The derivatives $u'_y\left(\cdot \right),\dot u\left(\cdot
  \right),u'_\varepsilon \left(\cdot \right)$ have polynomial majorants in $y$.}

\begin{corollary}
\label{C1} Let the conditions ${\cal A},{\cal B},{\cal C},{\cal D}$ be
fulfilled. Then for any
continuous functions $h\left(\cdot \right)$  we have
the relation
\begin{align}
\label{3-8}
\varepsilon ^{-1/2}\int_{\tau }^{T}h\left(t\right)\left[\hat Z_t-Z_t\right]{\rm
  d}t\Longrightarrow \int_{\tau }^{T}h\left(t\right)z\left(s,\vartheta _0,Y_s\right){\rm d}s .
\end{align}
\end{corollary}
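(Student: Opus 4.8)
The plan is to deduce Corollary~\ref{C1} from Theorem~\ref{T1} together with the moment bound \eqref{2-13}, the estimate \eqref{2-6d}, and the growth condition ${\cal D}$. Write $R_{t,\varepsilon}=\varepsilon^{-1/2}(\hat Z_t-Z_t)$. By Theorem~\ref{T1}, for each fixed $t$ the random variables $R_{t,\varepsilon}$ converge in distribution to
\begin{align*}
R_t=U'_y(t,Y_t,\vartheta_0)\sqrt{\tfrac{b(\vartheta_0,t)\sigma(t)}{2f(t)}}\,[\hat\zeta_t-\hat\xi_t]+z(t,\vartheta_0,Y_t),
\end{align*}
and the first term, coming from the filtration/innovation noise that is ``local in $t$'', has a one-dimensional marginal involving the asymptotically independent variables $\hat\zeta_{t,\varepsilon},\hat\xi_{t,\varepsilon}$. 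The key observation, already implicit in the representations \eqref{3-6} and \eqref{2-6e}, is that this fast-oscillating term integrates to zero: for $t\neq s$ the pairs $(\hat\zeta_{t,\varepsilon},\hat\xi_{t,\varepsilon})$ and $(\hat\zeta_{s,\varepsilon},\hat\xi_{s,\varepsilon})$ become independent mean-zero, so its contribution to $\int_\tau^T h(t)R_{t,\varepsilon}\,{\rm d}t$ vanishes in $L^2$ as $\varepsilon\to0$; only the ``slow'' component $z(t,\vartheta_0,Y_t)$ survives. This is why the right-hand side of \eqref{3-8} contains only the $z$-term.

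The steps I would carry out are as follows. First, use the decomposition \eqref{3-5a} (equivalently \eqref{3-7}) to write
$$
\varepsilon^{-1/2}(\hat Z_t-Z_t)=A_{t,\varepsilon}+B_{t,\varepsilon}+o_{\mathbf P}(1),
$$
where $B_{t,\varepsilon}=\dfrac{\dot u(t,Y_t,\vartheta_0,0)}{{\rm I}_\tau^t(\vartheta_0)}\int_\tau^t\sqrt{\tfrac{\dot b(\vartheta_0,s)^2 f(s)}{2b(\vartheta_0,s)\sigma(s)}}\,\xi_{s,\varepsilon}\,{\rm d}\bar W_s$ is the slow part and $A_{t,\varepsilon}$ collects the two terms proportional to $\hat m_t-m(\vartheta_0,t)$ and $m(\vartheta_0,t)-Y_t$. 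Second, integrate against $h$: $\int_\tau^T h(t)B_{t,\varepsilon}\,{\rm d}t$ converges in distribution to $\int_\tau^T h(t)z(t,\vartheta_0,Y_t)\,{\rm d}t$ because $\dot u(\cdot)\to\dot U(\cdot)$, ${\rm I}_\tau^t$ is deterministic and bounded away from $0$ by ${\cal B}_4$, and the stochastic integral driven by $\bar W$ with the asymptotically white weight $\xi_{s,\varepsilon}$ converges (in ${\cal C}[\tau,T]$, cf. the argument yielding \eqref{2-12}) to the Wiener integral $\int_\tau^t(\cdots)\,{\rm d}w(s)$; the independence of $Y$ and $w$ is as noted before the corollary. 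Third, show $\int_\tau^T h(t)A_{t,\varepsilon}\,{\rm d}t\to0$ in $L^2(\mathbf P_{\vartheta_0})$: using \eqref{3-6} and \eqref{2-6e}, $A_{t,\varepsilon}=O(\sqrt\varepsilon)\cdot(\text{local noise})$ after normalization, so
$$
\Ex_{\vartheta_0}\Bigl(\int_\tau^T h(t)A_{t,\varepsilon}\,{\rm d}t\Bigr)^2=\int_\tau^T\!\!\int_\tau^T h(t)h(s)\,\Ex_{\vartheta_0}[A_{t,\varepsilon}A_{s,\varepsilon}]\,{\rm d}t\,{\rm d}s,
$$
and the covariance kernel $\Ex_{\vartheta_0}[A_{t,\varepsilon}A_{s,\varepsilon}]$ is concentrated on $|t-s|\lesssim\varepsilon$ (it is, up to bounded factors, $e^{-c|t-s|/\varepsilon}$ from the exponential kernels in \eqref{3-6} and \eqref{2-6e}), hence the double integral is $O(\varepsilon)\to0$; the polynomial majorants of condition ${\cal D}$ together with the Gaussian moment bounds on $Y_t$ make the interchange of expectation and integration legitimate and control $\Ex_{\vartheta_0}|U'_y(t,Y_t,\vartheta_0)|^{2}<\infty$ uniformly in $t$. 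Fourth, handle the $o_{\mathbf P}(1)$ remainder from \eqref{3-7}: by \eqref{2-13}, \eqref{2-6d}, boundedness of the kernels, and the polynomial-majorant condition ${\cal D}$, its $L^1$-norm integrated against $h$ also tends to zero. Combining these three pieces via Slutsky's lemma gives \eqref{3-8}.

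The main obstacle I anticipate is the rigorous version of the third step — proving that the fast, $t$-local noise term $A_{t,\varepsilon}$ contributes nothing after integration in time. Heuristically this is just ``averaging out a white noise,'' but one must (i) control the joint law of $\hat m_t-m(\vartheta_0,t)$ across different times $t$, which requires more than the pointwise representation \eqref{3-6} — one needs a uniform-in-$t$ bound on the covariance, obtained by re-running the stochastic-integral estimate of Lemma~2 in \cite{Kut19} for the two-time kernel $e^{-K(\vartheta_0,t)(t-s)/\varepsilon}e^{-K(\vartheta_0,t')(t'-s)/\varepsilon}$; (ii) similarly control the cross-covariance between $\hat m_t-m(\vartheta_0,t)$ and $m(\vartheta_0,s)-Y_s$, which mixes the innovation Wiener process $\bar W$ with the original $W,V$; and (iii) justify, via condition ${\cal D}$ and the finite Gaussian moments of $Y_t$, that multiplying these noise terms by the (random) derivative factors $u'_y(t,\tilde m_t,\vartheta^\star_{t,\varepsilon},\varepsilon)$ does not spoil the $O(\varepsilon)$ bound — replacing $\tilde m_t$ and $\vartheta^\star_{t,\varepsilon}$ by $Y_t$ and $\vartheta_0$ at the cost of lower-order terms, as in \eqref{3-7}. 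Once the covariance kernels are shown to be supported, up to exponentially small tails, on a band of width $O(\varepsilon)$ around the diagonal, the $O(\varepsilon)\to 0$ conclusion is immediate and the corollary follows.
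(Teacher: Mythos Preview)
Your proposal is correct and follows essentially the same route as the paper. The paper's proof is much terser: it simply invokes the limits $\int_\tau^T r(Y_s,s)\hat\zeta_{s,\varepsilon}\,{\rm d}s\to 0$ and $\int_\tau^T g(Y_s,s)\hat\xi_{s,\varepsilon}\,{\rm d}s\to 0$ (referring to \cite{Kut19}) and then displays the explicit covariance computation $\Ex_{\vartheta_0}\hat\zeta_{t_1,\varepsilon}\hat\zeta_{t_2,\varepsilon}\sim e^{-K|t_1-t_2|/\varepsilon}\to 0$ for $t_1\neq t_2$, which is exactly your ``band of width $O(\varepsilon)$ around the diagonal'' argument; your steps (ii) and (iii) spell out sub-issues the paper leaves implicit.
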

\begin{proof} 
The proof follows from the limits
\begin{align*}
&\int_{\tau }^{T}r\left(Y_s,s\right)\hat\zeta _{s,\varepsilon }{\rm
    d}s\longrightarrow 0,\qquad 
&\int_{\tau }^{T}g\left(Y_s,s\right)\hat\xi_{s,\varepsilon }{\rm
    d}s\longrightarrow 0
\end{align*}
for any continuous functions $r\left(\cdot \right),g\left(\cdot \right)$ with
finite  moments (see \cite{Kut19}). 

Indeed for the process
\begin{align*}
\hat\zeta _{t,\varepsilon }=\sqrt{\frac{2K\left(\vartheta
            _0,t\right)}{\varepsilon }}\int_{0}^{t}e^{-\frac{1}{\varepsilon }K\left(\vartheta
            _0,t\right)\left(t-s\right)}
          {\rm d} W_{s}
\end{align*}
we have 
\begin{align*}
\Ex_{\vartheta _0} \hat\zeta _{t_1,\varepsilon }\hat\zeta _{t_2,\varepsilon
}=\sqrt{\frac{K_1K_2}{K_1+K_2}} \left[e^{-\frac{1}{\varepsilon
    }K\left|t_1-t_2\right|}-e^{-\frac{1}{\varepsilon
    }\left[K_1t_1+K_2t_2\right]}\right] \longrightarrow 0,
\end{align*}
where $K_i=K\left(\vartheta _0,t_i\right),i=1,2$ and set
$K=K_1\1_{\left\{t_1>t_2\right\}}+K_2\1_{\left\{t_1\leq t_2\right\}}$.

\end{proof}

\begin{corollary}
\label{C2} Let the conditions ${\cal A},{\cal B},{\cal C},{\cal D}$ be fulfilled. Then
\begin{align*}
\varepsilon ^{-1}\Ex_{\vartheta _0}\left(\hat Z_t-Z_t\right)^2&\longrightarrow
            {\frac{b\left(\vartheta _0,t\right)\sigma
                \left(t\right)}{f\left(t\right)}} \Ex_{\vartheta _0}
            U'_y\left(t,Y_t,\vartheta _0\right)^2\, +\frac{\Ex_{\vartheta _0}
              {\dot U\left(t,Y_t,\vartheta _0\right)^2}}{ {\rm I}_\tau
              ^t\left(\vartheta _0\right) }.
\end{align*}
\end{corollary}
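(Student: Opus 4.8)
The plan is to start from the asymptotic representation \eqref{3-5} (equivalently \eqref{3-7}) established in Theorem \ref{T1}, square it, and pass to the limit in expectation. Writing
\[
\frac{\hat Z_t-Z_t}{\sqrt\varepsilon}=u'_y\left(t,Y_t,\vartheta_0,0\right)\sqrt{\frac{b\left(\vartheta_0,t\right)\sigma\left(t\right)}{2f\left(t\right)}}\,\bigl[\hat\zeta_{t,\varepsilon}-\hat\xi_{t,\varepsilon}\bigr]\bigl(1+o(1)\bigr)+z_\varepsilon\left(t,\vartheta_0,Y_t\right)\bigl(1+o(1)\bigr),
\]
where $z_\varepsilon$ denotes the second (Fisher-score) term, one squares and takes $\Ex_{\vartheta_0}$. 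The first squared term contributes $\Ex_{\vartheta_0}U'_y\left(t,Y_t,\vartheta_0\right)^2\,\frac{b\sigma}{2f}\,\Ex\bigl|\hat\zeta_t-\hat\xi_t\bigr|^2$; since $\hat\zeta_t$ and $\hat\xi_t$ are independent standard normals, $\Ex\bigl|\hat\zeta_t-\hat\xi_t\bigr|^2=2$, which produces the coefficient $\frac{b\left(\vartheta_0,t\right)\sigma\left(t\right)}{f\left(t\right)}$ in the statement. The second squared term contributes $\frac{\Ex_{\vartheta_0}\dot U\left(t,Y_t,\vartheta_0\right)^2}{{\rm I}_\tau^t\left(\vartheta_0\right)^2}$ times $\Ex\bigl|\int_\tau^t(\dot b^2 f/2b\sigma)^{1/2}\,{\rm d}w(s)\bigr|^2={\rm I}_\tau^t\left(\vartheta_0\right)$, giving $\frac{\Ex_{\vartheta_0}\dot U\left(t,Y_t,\vartheta_0\right)^2}{{\rm I}_\tau^t\left(\vartheta_0\right)}$. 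The cross term must vanish in the limit: this is where the independence of the Gaussian process $Y_t$ and the Wiener process $w(\cdot)$ (noted just after the proof of Theorem \ref{T1}) together with the limits $\int_\tau^T r\left(Y_s,s\right)\hat\zeta_{s,\varepsilon}\,{\rm d}s\to0$, $\int_\tau^T g\left(Y_s,s\right)\hat\xi_{s,\varepsilon}\,{\rm d}s\to0$ of Corollary \ref{C1} are used — more precisely, in the pre-limit the integrand $\hat\zeta_{t,\varepsilon}-\hat\xi_{t,\varepsilon}$ is, for each fixed $t$, built from the increments $W_s,V_s$ near $t$, while the score integral $\int_\tau^t(\cdots)\xi_{s,\varepsilon}\,{\rm d}\bar W_s$ is asymptotically independent of it, so the expectation of the product tends to $0$.

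The technical point that makes the above rigorous is uniform integrability: convergence in distribution of $(\hat Z_t-Z_t)/\sqrt\varepsilon$ alone does not give convergence of second moments. First I would invoke condition ${\cal D}$ (polynomial majorants in $y$ for $u'_y,\dot u,u'_\varepsilon$) together with the Gaussianity of $Y_t$ — which makes $\Ex_{\vartheta_0}|Y_t|^k<\infty$ for all $k$ — and the moment bounds already available: $\Ex_{\vartheta_0}|m\left(\vartheta_0,t\right)-Y_t|^2\le C\varepsilon$ (Lemma \ref{L2}), $\Ex_{\vartheta_0}|\dot m\left(\vartheta_0,t\right)|^p\le C\varepsilon^{p/2}$ \eqref{2-13}, $\varepsilon^{-p/2}\Ex_{\vartheta_0}|\hat\vartheta_{\tau,\varepsilon}-\vartheta_0|^p\le C$ \eqref{2-10}, and the analogous bound for $\vartheta_{t,\varepsilon}^\star-\vartheta_0$ and for $\hat m_t-m\left(\vartheta_0,t\right)$ coming from \eqref{3-6}. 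Expanding $\hat Z_t-Z_t$ as in \eqref{3-5a} into four terms and using Cauchy–Schwarz, each term is $O(\varepsilon)$ in $L^1$ and in fact $(\hat Z_t-Z_t)/\sqrt\varepsilon$ has a bounded $(2+\delta)$-moment for some $\delta>0$; that gives uniform integrability of its square, hence $\Ex_{\vartheta_0}\bigl[(\hat Z_t-Z_t)^2/\varepsilon\bigr]$ converges to the second moment of the limit law in \eqref{3-5}.

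Then it remains only to compute the second moment of the right-hand side of \eqref{3-5}. Because $\hat\zeta_t,\hat\xi_t$ are mutually independent $\mathcal N(0,1)$ and, crucially, independent of the pair $\bigl(Y_t,\int_\tau^t(\cdots)\,{\rm d}w\bigr)$, and because $Y_t$ and $w(\cdot)$ are independent, the cross term has zero mean and the two squares separate into the products of expectations written above; $\Ex\bigl[(\hat\zeta_t-\hat\xi_t)^2\bigr]=2$ cancels the $2$ in $\sqrt{b\sigma/2f}$, and $\Ex\bigl[\bigl(\int_\tau^t(\dot b^2f/2b\sigma)^{1/2}{\rm d}w\bigr)^2\bigr]={\rm I}_\tau^t\left(\vartheta_0\right)$ cancels one power of ${\rm I}_\tau^t\left(\vartheta_0\right)$ in the denominator. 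This yields exactly
\[
\varepsilon^{-1}\Ex_{\vartheta_0}\left(\hat Z_t-Z_t\right)^2\longrightarrow\frac{b\left(\vartheta_0,t\right)\sigma\left(t\right)}{f\left(t\right)}\,\Ex_{\vartheta_0}U'_y\left(t,Y_t,\vartheta_0\right)^2+\frac{\Ex_{\vartheta_0}\dot U\left(t,Y_t,\vartheta_0\right)^2}{{\rm I}_\tau^t\left(\vartheta_0\right)}.
\]
The main obstacle is the uniform-integrability step: one must carefully control the intermediate-point derivatives $u'_y\left(t,\tilde m_t,\cdot\right)$, $\dot u\left(t,\cdot,\tilde\vartheta,\cdot\right)$, $u'_\varepsilon\left(t,Y_t,\cdot,\tilde\varepsilon\right)$ evaluated at random arguments, which is precisely why condition ${\cal D}$ (polynomial majorants) is imposed and why the Gaussian tails of $Y_t$ and the $L^p$-bounds on $\tilde m_t$, $\hat m_t$, $\vartheta_{t,\varepsilon}^\star$ are needed; the remaining algebra is routine.
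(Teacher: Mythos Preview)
Your proposal is correct and follows essentially the same approach as the paper, which simply says ``the proof follows from the same arguments as in Corollary~\ref{C1}'' --- namely, square the representation~\eqref{3-7}, take expectations, and use the asymptotic independence of $\hat\zeta_{t,\varepsilon},\hat\xi_{t,\varepsilon}$ and the score term to kill the cross terms. You are actually more careful than the paper in making explicit the uniform-integrability step (and the role of condition~${\cal D}$) needed to upgrade weak convergence to convergence of second moments; the paper leaves this implicit.
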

\begin{proof} The proof follows from the same arguments as in Corollary \ref{C1}.

\end{proof}

\section{Discussion}

It is shown that from four  components of the error of approximation \eqref{3-5}
only two of them have main contribution (Theorem \ref{T1}). Moreover, if we
consider the integrated error, then we have just one term (Corollary
\ref{C1}). 

The contribution of the approximation of the conditional expectation
$m\left(\vartheta _0,t\right)$ by  the values $\hat m_t$ of solution of recurrent
equation (adaptive filtration) is negligible.  This means that for the observational model \eqref{2-1}, \eqref{2-2} with unknown parameter of volatility
function, the equation \eqref{3-4} proposing the approximation $\hat m_t$  of
$m\left(\vartheta _0,t\right)$ has error of order $\varepsilon $. This result
could be applied in filtration theory.

Several possible generalizations could be made quite
easily. For example, if we suppose  as in \cite{Kut19} that the function
$f\left(t\right)=f\left(\vartheta ,t\right)$, then any
construction  of approximation of $Z_t$ would be close to this one given here. 

Another statement of the problem could be obtained if $b\left(\vartheta
,t\right)$ is replaced by $\psi _\varepsilon b\left(\vartheta
,t\right)$, where $\psi _\varepsilon =\varepsilon ^\delta , \delta \in
(0,1/3]$. The problem of parameter estimation for such models was studied in
  \cite{Kut20b} and the problem of approximation of the solution of BSDE for
  such models could be considered as well.

{\bf Acknowledgment.} This research (sections 2.1-2.3) was financially
supported by the Ministry of Education and Science of the Russian Federation
(project no. FSWF-2020-0022) and the research (section 2.4) was
carried out with support by RSF project no. 20-61-47043.

\end{document}